\newtheoremstyle{plainNoItalics}{}{}{\normalfont}{}{\bfseries}{.}{ }{}
\theoremstyle{plain}
\newtheorem{thm}{Theorem}[section]
\theoremstyle{plainNoItalics}
\newtheorem{lem}[thm]{Lemma}
\newtheorem{defn}[thm]{Definition}
\newtheorem{rem}[thm]{Remark}
\newtheorem{prop}[thm]{Proposition}
\newcommand{\beq}{\begin{equation}}
\newcommand{\eeq}{\end{equation}}
\newcommand{\beqa}{\begin{eqnarray}}
\newcommand{\eeqa}{\end{eqnarray}}
\newcommand{\bit}{\begin{itemize}}
\newcommand{\eit}{\end{itemize}}
\newcommand{\bedef}{\begin{defn}}
\newcommand{\edefn}{\end{defn}}
\newcommand{\bpro}{\begin{prop}}
\newcommand{\epro}{\end{prop}}
\newcommand{\df}{\partial}
\newcommand{\Dt}{\Delta t}
\newcommand{\mE}{\mathcal E}
\begin{document}



\begin{center}
{\bf
Analysis of Asymptotic Preserving DG-IMEX Schemes
for Linear Kinetic Transport Equations in a Diffusive Scaling
}
\end{center}
\vspace{.2in}

\centerline{
Juhi Jang \footnote{Department of Mathematics, University of California Riverside, Riverside, CA 92521. E-mail: juhijang@math.ucr.edu. Supported in part by NSF grants DMS-0908007 and DMS-1212142.}
Fengyan Li \footnote{Department of Mathematical Sciences, Rensselaer Polytechnic Institute, Troy, NY 12180. E-mail: lif@rpi.edu. Supported in part by NSF CAREER award DMS-0847241 and NSF DMS-1318409.}
Jing-Mei Qiu \footnote{Department of Mathematics, University of Houston,
Houston, 77204. E-mail: jingqiu@math.uh.edu.
The third and fourth authors are supported in part by Air Force Office of Scientific Computing YIP grant FA9550-12-0318, NSF grant DMS-1217008 and University of Houston.}
Tao Xiong \footnote{Department of
Mathematics, University of Houston, Houston, 77204. E-mail:
txiong@math.uh.edu}
}

\bigskip
\centerline{\bf Abstract}

\smallskip

In this paper, some theoretical aspects will be addressed for the asymptotic preserving DG-IMEX schemes recently proposed in \cite{JLQX_numerical} for kinetic transport equations under a diffusive scaling. We will focus on the methods that are based on discontinuous Galerkin (DG) spatial discretizations with the $P^k$ polynomial space and a first order IMEX temporal discretization, and apply them to two linear models:  the telegraph equation and the one-group transport equation in slab geometry. In particular, we will establish uniform numerical stability with respect to  Knudsen number $\varepsilon$ using energy methods, as well as error estimates for any given $\varepsilon$. When $\varepsilon\rightarrow 0$, a rigorous asymptotic analysis of the schemes is also obtained. Though the methods and the analysis are presented for one dimension in space, they can be generalized to higher dimensions directly.

\bigskip
\noindent {\bf Keywords:} Kinetic transport equations; Asymptotic preserving; High order discontinuous Galerkin method; IMEX; Stability analysis; Error estimate

\section{Introduction}
\label{sec1}
\setcounter{equation}{0}
\setcounter{figure}{0}
\setcounter{table}{0}

Kinetic theory is at the center of multi-scale modeling connecting the invisible microscopic models  with the macroscopic hydrodynamic models. In particular, when the mean free path of particles is sufficiently small, the system is close to the equilibrium state and a macroscopic model is a good approximation to the kinetic equation. Building a passage from kinetic to macroscopic models is a very interesting problem and there has been a lot of mathematical progress over the decades \cite{bardos1991fluid,  saint2009hydrodynamic}.

Designing accurate and efficient numerical schemes for kinetic equations with a broad range of Knudsen number $\varepsilon$ has been an active research area for more than two decades. Among many multi-scale approaches, asymptotic preserving (AP) methods are known to be able to effectively deal with multi-scales and capture hydrodynamic macro-scale limits. Specifically, the schemes are designed to mimic the asymptotic limit from the kinetic to the hydrodynamic models on the PDE level. As a result, the scheme in the limit of $\varepsilon\rightarrow0$
becomes a consistent discretization of the limiting macro-scale equations.

AP schemes have been intensively studied in different settings and under different scalings, for example for stationary problems \cite{larsen1987asymptotic, morel1989asymptotic, guermond2010asymptotic} and for time dependent problems with hyperbolic and diffusive scalings \cite{jin2010asymptotic, pareschi2011efficient}. Below, to our best knowledge, we briefly review some existing AP methods for kinetic equations in the diffusive limit.  It was firstly shown in \cite{jin1996numerical, naldi1998numerical} that an improper treatment of spatial discretizations even with a stable implicit time discretization may fail to capture the correct asymptotic limit with an under-resolved computational mesh. In \cite{jin1998diffusive, naldi1998numerical, jin2000uniformly}, some proper splitting between the convection and stiff source terms was introduced for AP properties. Later, AP schemes were designed via different multi-scale approaches, e.g. by a standard perturbation procedure  \cite{klar1998asymptotic}, by moment closure approaches \cite{carrillo2008numerical}, by a micro-macro decomposition of kinetic transport equations \cite{lemou2010new} and by projective integration \cite{lafitte2012asymptotic}.  AP schemes have also been designed with different high order discretization strategies such as the discontinuous Galerkin framework \cite{lowrie2002methods}, weighted essentially non-oscillatory (WENO) methods as well as  globally stiffly accurate implicit-explicit (IMEX) schemes \cite{boscarino2011implicit, boscarino2013flux}. Despite the fact that much computational effort was made in designing various AP schemes, rigorous proofs for uniform stability, error estimates and AP properties for fully discrete schemes are relatively rare. For AP schemes based on the micro-macro decomposition, there are some theoretical results.
In \cite{lemou2010new},  a von Neumann analysis was conducted for numerical stability of a first order AP scheme applied to the two-velocity telegraph equation. Stability and error estimate for more general problems were obtained in \cite{liu2010analysis} based on energy methods. In the setting of stationary problems, a rigorous asymptotic analysis was presented for an upwind discontinuous Galerkin discretization, together with the convergent property of the limiting schemes under mesh refinement, when solving the radiative transport equation \cite{guermond2010asymptotic}.

Recently, a family of high order schemes was proposed in \cite{JLQX_numerical} for several linear and nonlinear {\em discrete-velocity} kinetic transport equations under a diffusive scaling.
The schemes are defined for a reformulated set of equations which is obtained from a micro-macro decomposition of the problem just as in \cite{lemou2010new}, with the idea originally proposed in \cite{liu2004boltzmann}. Based on such reformulation,  discontinuous Galerkin (DG) spatial discretizations of arbitrary order of accuracy are applied with suitable numerical fluxes, and in time, we employ globally stiffly accurate high order IMEX Runge-Kutta (RK) methods \cite{boscarino2011implicit} equipped with a carefully chosen implicit-explicit strategy. A formal asymptotic analysis shows that the proposed methods, as Knudsen number $\varepsilon$ goes to $0$, become consistent  high order discretizations for the limiting macro-scale equations. Numerical results presented in  \cite{JLQX_numerical} also demonstrate the stability and high order accuracy of the proposed schemes when $\varepsilon$ is of order $1$ and in the limit of $\varepsilon$ going to $0$.

The current paper follows up the work in \cite{JLQX_numerical} and addresses some theoretical aspects of the proposed methods. In particular, we will establish uniform stability, error estimates, and perform a rigorous asymptotic analysis for the fully discrete scheme when DG spatial discretizations using the $P^k$ polynomial space are coupled with a first order IMEX time discretization.  Two families of linear kinetic transport equations are considered: the two-velocity telegraph equation, and the one-group transport equation in slab geometry with a continuous velocity field, for which the method in \cite{JLQX_numerical} can be directly formulated and applied.
In this work, uniform time step constraint with respect to $\varepsilon$ is established for numerical stability by using energy methods. More specifically, for the DG method with the $P^0$ polynomial space, optimal time step restriction is achieved as in \cite{lemou2010new, liu2010analysis}, namely $\Delta t = O(h^2)$ in the diffusive regime with $\varepsilon\ll1$, and $\Delta t = O(\varepsilon h)$  in the convective regime with $\varepsilon=O(1)$; for the method with the $P^k$ ($k\ge1$) polynomial space, the time step restriction is $\Delta t = O(h^2)$, which is not the most desired condition in the convective regime. It is expected that extending the stability analysis to high order IMEX schemes can relieve the time step constraint for  the method with the $P^k$ ($k\ge1$) space \cite{zhang2010stability}.
 When higher order temporal discretizations are used, most techniques to analyze the spatial discretizations can be carried over, yet new difficulties will arise related to the high order time integrations \cite{zhang2004error, zhang2010stability}. Such analysis is a subject of our future investigation. Based on numerical stability and approximation properties of the discrete spaces, error estimates are further established for the schemes with different choices of numerical fluxes. The results confirm the high order accuracy in space and the first order accuracy in time of the methods. By using the weakly sequential compactness of Hilbert space (which in the present work is either the finite-dimensional discrete space or $L^2(\Omega_v)$ with $\Omega_v$ given in Section \ref{sec2}), we also prove that the proposed schemes will converge to some consistent discretizations of the limiting heat equation when $\varepsilon\rightarrow 0$. Though the methods and  the analysis are presented here and in \cite{JLQX_numerical} for one dimension in space, they can be generalized to higher dimensions directly.

The paper is organized as follows. In Section \ref{sec2}, we introduce the kinetic transport equation in a diffusive scaling, provide its micro-macro decomposition and diffusive limit, and review a class of DG-IMEX schemes introduced in \cite{JLQX_numerical}. In Section \ref{sec:analysis}, stability analysis,  error estimates, and a rigorous asymptotic analysis are carried out for the schemes and discussed in various settings.

%
%
\section{Formulation}
\label{sec2}
\setcounter{equation}{0}
\setcounter{figure}{0}
\setcounter{table}{0}

We consider the following linear kinetic transport equation in a diffusive scaling
\begin{equation}\label{f-JH}
\varepsilon f_t + v \partial_x f = \frac{1}{\varepsilon}\left(\langle f\rangle - f\right)
\end{equation}
with the initial data $f_0$ and spatially periodic boundary conditions, where $f=f(x,v,t)$ is the distribution function of particles that depends on time $t>0$, position $x\in\Omega_x\subset {\mathbb R}$, and velocity $v\in \Omega_v$.
The parameter $\varepsilon>0$ measures the distance of the system to the equilibrium state and it can be regarded  as  the mean free path of the particles; when $\varepsilon$ is small, the system is close to equilibrium; when $\varepsilon$ is large, the system is far from equilibrium. The operator $\langle f\rangle - f$ is the normalized scattering operator. Here  $\langle f\rangle=\int_{\Omega_v} f d \mu$, and $\mu$ is a measure associated with the velocity space $\Omega_v$ and it will be specified next for each model. In this paper, we discuss two important families of the problem \eqref{f-JH}: the telegraph equation and the one-group transport equation in slab geometry. 

\bigskip
\noindent
$\bullet$ {\underline{ Telegraph equation }} is a discrete-velocity kinetic model  with $\Omega_v=\{-1, 1\}$, namely, $v$ either takes value $-1$ or $1$, and $\mu$ is a discrete
measure on $\{-1,1\}$ such that
\[
\langle f\rangle=\int_{\Omega_v} f d \mu: =\frac12(f(x,v=-1,t) + f(x,v=1,t)).
\]

\smallskip
\noindent
$\bullet$ {\underline{ The one-group transport equation in slab geometry }} is a kinetic equation \eqref{f-JH}
where the velocity space, $\Omega_v=[-1,1]$, is continuous. In addition, 
$d\mu=\frac12 dv$, with $dv$ being the standard Lebeque measure on $\Omega_v$, and 
\[
\langle f\rangle=\int_{\Omega_v} f d \mu: =\frac12\int_{-1}^1 f(v) dv.
\]
 The scattering operator on the right side of \eqref{f-JH}  can be of more general form:
$(Lf)(v)=\int_{-1}^1   s(v,v')(f(v')-f(v)) dv' $ where the kernel $s$ satisfies  $0<s_m\leq s(v,v')\leq s_M$ for all $v,v'\in [-1,1] $, $\int_{-1}^1 s(v,v')dv' =1$, and $s(v,v')=s(v',v)$.
For such kernel $s$, one can deduce that $\langle L\phi \rangle=0$ for all $\phi\in L^2[-1,1]$ and $\langle \phi L\phi \rangle\leq - 2s_m  \langle \phi^2 \rangle $ for $\phi\in \mathcal{N}(L)^\perp$ \cite{bardos1984diffusion}. Of course, the simplest case of such $s$ is $s(v,v')=1/2$, which is the case of \eqref{f-JH}. We refer to \cite{cercignani1988boltzmann} for more detailed discussions on the linear transport equation and \cite{liu2010analysis} for the one-group transport equation in slab geometry.

\bigskip
It turns out that dealing with discrete velocity or continuous velocity does not affect much the formulation of the numerical methods and the theoretical results. We will treat both cases in a uniform setting and point out the differences when necessary. Let us consider the Hilbert space $L^2(\Omega_v; d\mu)$ in $v$ variable with the inner product:
$\langle f , g  \rangle := \int f g d\mu = \langle fg \rangle$, and let  $\Pi$ be  the orthogonal projection operator onto $\text{Span} (1)$,
defined as $\Pi: f \mapsto \Pi f=\langle f \rangle$. Let $ \rho:=  \Pi f=\langle f \rangle$
 denote the macroscopic density for $f$ and we write
\beq
\label{eq:f:mmd:LJ}
f = \langle f \rangle  + \varepsilon g = \rho+\varepsilon g
\eeq
where $\langle g\rangle = 0$. We recall the micro-macro formulation for \eqref{f-JH} in \cite{JLQX_numerical}, motivated by \cite{lemou2010new, liu2004boltzmann},
\begin{equation}
\begin{split}\label{eq:mmd:LJ}
&\partial_t\rho+\partial_x \langle vg \rangle  =0,\\
&\partial_t g  +\frac{1}{\varepsilon}  (\mathbf{I}-\Pi)(v\partial_xg) +\frac{1}{\varepsilon^2 }v\partial_x\rho = -\frac{1}{\varepsilon^2 }g.
\end{split}
\end{equation}
The operator ${\bf I}$ used here is the identity operator.
It is easy to verify that $\partial_t\rho=\partial_x\left(\langle v^2 \rangle\partial_{x}\rho\right)+O(\varepsilon)$ which leads to the linear diffusion equation as $\varepsilon\rightarrow 0$. We note that $\langle v^2 \rangle=1$ for the telegraph equation and $\langle v^2 \rangle=1/3$ for the one-group transport equation in slab geometry.



\newcommand{\eps}{\varepsilon}
\newcommand{\mD}{{\mathcal D}}
\newcommand{\Ox}{{\Omega_x}}
\newcommand{\Ov}{{\Omega_v}}
\newcommand{\xL}{{x_{i-\frac{1}{2}}}}
\newcommand{\xR}{{x_{i+\frac{1}{2}}}}
\newcommand{\iL}{{i-\frac{1}{2}}}
\newcommand{\iR}{{i+\frac{1}{2}}}
\newcommand{\testR}{{\phi}}   
\newcommand{\testG}{{\psi}}   
\newcommand{\mC}{{\mathcal{C}}}
\newcommand{\bI}{{\bf{I}}}


\bigskip
Recently in \cite{JLQX_numerical}, a family of asymptotic preserving methods were proposed for the telegraph equation  based on its micro-macro decomposition \eqref{eq:mmd:LJ}. The methods are of formal high order accuracy in both space and time. They involve discontinuous Galerkin (DG) spatial discretizations and globally stiffly accurate implicit-explicit (IMEX) Runge-Kutta methods in time.   In the limit of $\varepsilon\rightarrow0$, a formal asymptotic analysis \cite{JLQX_numerical} shows that the limiting schemes are
consistent high order discretizations for the limiting linear heat equation.
Though not discussed in   \cite{JLQX_numerical}, both the methods and the formal analysis can be naturally extended to the one-group transport equation in slab geometry where the velocity field is continuous.
Next we will present the formulation of the methods applied to \eqref{f-JH}, before establishing numerical stability, error estimates, and a rigorous asymptotic analysis in the following section.

Let's first introduce some notation. Start with $\{\xR\}_{i=0}^{i=N}$, a partition of $\Omega_x=[x_\textrm{min}, x_\textrm{max}]$. Here $x_{\frac12}=x_\textrm{min}$, $x_{N+\frac12}=x_\textrm{max}$, each element is denoted as $I_i=[\xL, \xR]$ with its length $h_i$, and $h=\max_i h_i$. Given any non-negative integer $k$, we define a finite dimensional discrete space
\begin{equation}
U_h^k=\left\{u\in L^2(\Omega_x): u|_{I_i}\in P^k(I_i), \forall i\right\},
\label{eq:DiscreteSpace:1mesh}
\end{equation}
where the local space $P^k(I)$ consists of polynomials of degree at most $k$ on $I$. Note functions in $U_h^k$ are piecewise-defined, we further denote jump and average of $u$ at $x_\iR$, $\forall i$, as $[u]_\iR={u(x_\iR^+)-u(x_\iR^-)}$ and $\{u\}_\iR=\frac12(u(x_\iR^+)+u(x_\iR^-))$, respectively. Here $u(x^\pm)=\lim_{\Delta x\rightarrow 0^\pm} u(x+\Delta x)$, and we also use $u_\iR=u(x_\iR)$, $u^\pm_\iR=u(x^\pm_\iR), \forall i$.

With the same DG spatial discretization proposed in \cite{JLQX_numerical}, a family of semi-discrete methods are given below for  the micro-macro system \eqref{eq:mmd:LJ}. Look for $\rho_h(\cdot,t), g_h(\cdot, v, t) \in U_h^k$, such that $\forall \testR, \testG \in U_h^k$,
\begin{subequations}
\label{eq:SDG:c}
\begin{align}
&(\df_t \rho_h, \testR)+a_h(g_h, \testR)=0, \label{eq:SDG:1:c}\\
&(\df_t g_h, \testG)+ \frac{1}{\eps} b_{h,v}(g_h, \testG)-\frac{v}{\eps^2} d_h(\rho_h, \testG)
=-\frac{1}{\eps^2}(g_h, \testG),
\label{eq:SDG:2:c}
\end{align}
\end{subequations}
where
\begin{subequations}
\begin{align}
\label{eq:ah}
a_h(g_h,\testR)&=-\sum_i \int_{I_i} \langle vg_h\rangle \df_x\testR dx - \sum_i \widehat{\langle vg_h\rangle}_{\iL} [\testR]_\iL,\\
\label{eq:bh}
b_{h,v}(g_h,\testG)&=((\bI-\Pi)\mD_h(g_h; v), \testG) =(\mD_h(g_h; v) - \langle\mD_h(g_h; v)\rangle, \testG),\\
\label{eq:dh}
d_h(\rho_h, \testG)&=\sum_i\int_{I_i} \rho_h \df_x\testG dx + \sum_i \widehat{\rho}_{h,\iL}[\testG]_\iL \;.
\end{align}
\end{subequations}
 Here and below, the standard inner product $(\cdot, \cdot)$ for the $L^2(\Omega_x)$ space is used, see e.g. the first term in \eqref{eq:SDG:1:c} and in \eqref{eq:SDG:2:c}. The function $\mD_h(g_h; v)$ in \eqref{eq:bh} belongs to $U_h^k$, and it is defined based on an upwind discretization of $v \df_x g$ within the DG framework,
\begin{align}
(\mD_h(g_h; v), \testG):
=-\sum_i\left(\int_{I_i} vg_h \df_x\testG dx\right) - \sum_i \widetilde{(vg_h)}_\iL[\testG]_\iL, \quad \psi\in U_h^k,
\label{eq:mD}
\end{align}
with $\widetilde{vg}$ being an upwind numerical flux consistent to $vg$,
\beq
\label{eq:vg:upwind:L-1}
\widetilde{vg}:=
\left\{
\begin{array}{ll}
v g^-,&\mbox{if}\; v>0\\
v g^+,&\mbox{if}\; v<0
\end{array}
\right.
=v\{g\}-\frac{|v|}{2}[g].
\eeq

Both $\widehat{\langle vg\rangle}$ and $\hat{\rho}$ in \eqref{eq:ah} and \eqref{eq:dh} are also numerical fluxes, and they are consistent to the physical fluxes $\langle vg\rangle$ and $\rho$.  In this paper, the following choices are considered:
\begin{subequations}
\label{eq:flux}
\begin{align}
\label{eq:flux:1}
{\textrm{alternating:}} & \;\;\;\; \widehat{\langle vg\rangle} = {\langle vg\rangle}^-,\; \hat{\rho} = {\rho}^+\;\textrm{(left-right)}; \;\;\; \textrm{or}\; \widehat{\langle vg\rangle}={\langle vg\rangle}^+, \;\hat{\rho}={\rho}^-\;\textrm{(right-left)},\\
\label{eq:flux:2}
{\textrm{central:}} & \;\;\;\; \widehat{\langle vg\rangle}=\{\langle vg\rangle\}, \; \hat{\rho}=\{\rho\}\;.
\end{align}
\end{subequations}
\begin{rem}
The spatial discretization given above as proposed in \cite{JLQX_numerical} differs from the methods proposed in \cite{lemou2010new} in several aspects.
First of all, the spatial discretization in \cite{lemou2010new} is first order and of finite difference type; while the method considered here is of finite element type, which is known to be more compact to achieve high order accuracy in a systematic manner. Moreover, our methods are based on one set of computational mesh and that in \cite{lemou2010new} is on staggered meshes. In fact, DG spatial discretizations can also be formulated on staggered meshes as in \cite{lemou2010new}. This, on one hand, saves one from using numerical fluxes $\widehat{\langle vg\rangle}$ and $\hat{\rho}$ at grid points (see \eqref{eq:ah}, \eqref{eq:dh}, \eqref{eq:flux}), and on the other hand, the resulting method only has suboptimal $k$-th order accuracy when the $P^k$ polynomial space is used with odd $k$ (this is not reported yet observed numerically, and it is similar to our method with central fluxes for $\widehat{\langle vg\rangle}$ and $\hat{\rho}$), while better spatial accuracy can be achieved both theoretically and numerically by suitably designing $\widehat{\langle vg\rangle}$ and $\hat{\rho}$ within the one-mesh framework.
\end{rem}

The semi-discrete method in \eqref{eq:SDG:c} will be further coupled with globally stiffly accurate IMEX Runge-Kutta methods in time \cite{boscarino2011implicit}. Such temporal discretizations are employed to deal with the stiffness of \eqref{eq:mmd:LJ} when $\varepsilon$ is small, and to ensure the correct asymptotic property of the scheme as $\varepsilon\rightarrow0$. Below we will give the fully discrete scheme with a {\em first} order  globally stiffly accurate IMEX Runge-Kutta method in time, termed as DG-IMEX1, which will be analyzed in the present paper.
Methods with higher order temporal accuracy are formulated for the telegraph equation in \cite{JLQX_numerical}, where the stability and accuracy are demonstrated numerically. Such high order temporal discretizations can also be defined for the one-group transport equation.
 The analysis for the fully discrete methods with higher than first order temporal accuracy is expected to be much more involved (see e.g. for such analysis in \cite{zhang2004error,zhang2010stability} when the temporal discretizations are explicit) and it will be investigated in the next stage of our project.

Given $\rho_h^n(\cdot),\; g_h^n(\cdot, v)\in U_h^k$ that approximate the solution $\rho$ and $g$ at $t=t^n$, we look for $\rho_h^{n+1}(\cdot), \;g_h^{n+1}(\cdot,v) \in U_h^k$, such that $\forall\; \phi, \psi\in U_h^k$,
\begin{subequations}
\label{eq:FDG:1T}
\begin{align}
&\left(\frac{\rho_h^{n+1}-\rho_h^n}{\Dt}, \phi\right) + a_h(g_h^n, \phi)=0, \label{eq:FDG:1T:r}\\
&\left(\frac{g_h^{n+1}-g_h^n}{\Dt}, \psi\right)+ \frac{1}{\eps} b_{h,v}(g_h^n, \psi)-\frac{v}{\eps^2} d_h(\rho_h^{n+1}, \psi) =
 -\frac{1}{\eps^2} (g_h^{n+1}, \psi).
 \label{eq:FDG:1T:g}
\end{align}
\end{subequations}
The most stiff terms, in both the convective and collisional terms, with a scale of $\frac{1}{\varepsilon^2}$ are treated implicitly here.

\begin{rem}
The implicit-explicit strategy used in our temporal discretization is different from that in \cite{lemou2010new}.   We consider it more natural to treat implicitly the most stiff terms with the scale of $\frac{1}{\varepsilon^2}$ in the micro-macro reformulation, and there is no essential change in the computational complexity. Specifically, one can solve  \eqref{eq:FDG:1T:r} first for $\rho_h^{n+1}$, then \eqref{eq:FDG:1T:g} for $g_h^{n+1}$ from a block-diagonal system, indicating our implicit-explicit strategy results in comparable computational complexity as that in \cite{lemou2010new}. Moreover, this implicit-explicit strategy is especially important when it is combined with high order globally stiffly accurate temporal discretizations in \cite{JLQX_numerical}: it not only ensures the limiting schemes as $\varepsilon\rightarrow 0$ to be consistent high order discretizations for the limiting equations, but also preserves the limiting equilibrium on the discrete level, in the sense that
\begin{equation}
vd_h(\rho_h^*, \psi)=(g_h^*, \psi),\quad \psi\in U_h^k,
\end{equation}
is satisfied by $(\rho_h^*, g_h^*)$, the approximating solution from any of the internal stages over one time step or  at any discrete time $t^n$ in the limit of $\varepsilon\rightarrow 0$ (see also Section 3.2 in \cite{JLQX_numerical}).
The difference in the implicit-explicit strategy calls for a non-conventional definition of the discrete energy in Theorem \ref{eq:thm1}.  In Remark \ref{rem:3.5} of Section \ref{sec:analysis}, there is also some discussion about how the theoretical results here and in \cite{lemou2010new, liu2010analysis} are related.
\end{rem}

\section{Theoretical Results: Stability, Error Estimates, and Rigorous Asymptotic Analysis}
\label{sec:analysis}

In this section, stability and error estimates will be established for DG-IMEX1 method in \eqref{eq:FDG:1T} for both the telegraph equation and the one-group transport equation in slab geometry based on their micro-macro formulation \eqref{eq:mmd:LJ}.
One will see that uniform stability result is obtained with respect to $\varepsilon$. In terms of accuracy, the method is first order in time for any given $\varepsilon$. The accuracy in space is higher when polynomials of higher degree are used for spatial approximations.   When the exact solutions are bounded uniformly with respect to $\eps$ in certain norms, the error estimates are also uniform in $\eps$. In addition, a rigorous asymptotic analysis is presented for the proposed methods when $\eps\rightarrow 0$.

Without loss of generality, the mesh is assumed to be uniform with $h=h_i, \forall i$. Our results can be extended to general meshes when $\frac{\max_i h_i}{\min_i h_i}$ is uniformly bounded during the mesh refinement. Moreover, with very little change, our analysis can  be established for the one-group transport equation in slab geometry with the more general scattering operator as in Section \ref{sec2} (some constants in the results will also depend on the bounds $s_m$ and $s_M$ of the kernal $s(\cdot,\cdot)$).  For simplicity, we will not present the analysis for the general case. The analysis will be based on the following norms,
\[||\phi||=||\phi||_{L^2(\Ox)}, \qquad |||\phi|||=(\langle ||\phi||^2\rangle)^{1/2}.
 \]
 In the estimates, two standard inverse inequalities will be used \cite{ciarlet:book}. There exist constants $C_{\textrm{inv}}, \hat{C}_{\textrm{inv}}$, such that for any $w\in P^k([a,b])$,
\begin{subequations}
\label{eq:inv}
\begin{align}
&|w(y)|^2 (b-a) \leq C_{\textrm{inv}}\int_a^b w(x)^2 dx, \qquad\mbox{with}\; y=a, \;\mbox{or}\; b \label{eq:inv:1}\\
&(b-a)^2 \int_a^b |w_x(x)|^2 dx \leq \hat{C}_{\textrm{inv}}\int_a^b w(x)^2 dx.
\label{eq:inv:2}
\end{align}
\end{subequations}
The constants $C_{\textrm{inv}}, \hat{C}_{\textrm{inv}}$ are independent of $a$ and $b$, and they depend on $k$ (see \cite{warburton2008taming, reyna2013bound} for the explicit expression of $k$-dependence). Some other basic inequalities, such as Young's inequality
$xy\leq \frac{x^2}{2\zeta}+\frac{\zeta y^2}{2}$ ($\zeta>0$), are used without being pointed out. We also denote
\beq
\label{eq:alpha}
\alpha_1=(||v||_\infty^2+\langle v^2\rangle) \hat{C}_{\textrm{inv}}\;,\quad\alpha_2= 2(||v||_\infty+\langle|v|\rangle)C_{\textrm{inv}}\;,\quad
\alpha_3=2 ||v||_\infty C_{\textrm{inv}}\;,
\eeq
for later use.
%
%
Below are some orthogonal projections onto $U_h^k$ utilized in our analysis.
\begin{itemize}
\item $L^2$ projection $\pi_h$: $\pi_h w \in U_h^k$, such that
$$\int_{I_i} (\pi_h w - w) v dx=0, \qquad \forall v\in P^k(I_i), \forall i.$$
\item Gauss-Radau projection $\pi_h^-$: $\pi_h^- w \in U_h^k$, such that
$$\int_{I_i} (\pi_h^- w - w) v dx=0, \qquad \forall v\in P^{k-1}(I_i), \forall i$$
 and $(\pi_h^- w)_\iR^-=w(x_\iR^-), \forall i.$
\item Gauss-Radau projection $\pi_h^+$: $\pi_h^+ w \in U_h^k$, such that
$$\int_{I_i} (\pi_h^+ w - w) v dx=0, \qquad \forall v\in P^{k-1}(I_i), \forall i$$
 and $(\pi_h^+ w)_\iL^+=w(x_\iL^+), \forall i.$
\end{itemize}
These projections are commonly used in theoretical analysis of DG methods, and they have the following properties which can be easily established \cite{ciarlet:book},
\beq
||w-\Pi_h w||^2+h \sum_i ((w-\Pi_h w)_\iL^\pm)^2 \leq C h^{2k+2} ||w||_{H^{k+1}(\Omega_x)}^2.
\label{eq:pi:1}
\eeq
Here  $\Pi_h$ is any of $\pi_h, \pi_h^-, \pi_h^+$, and the constant $C$ depends only on $k$.

At $t=0$, we initialize the methods through $L^2$ projection in space, that is, we take $\rho_h^0=\pi_h \rho|_{t=0}$ and $g_h^0=\pi_h g|_{t=0}$. It can be seen easily that
\begin{equation}\label{eq:a:gh0}
\langle g_h^0\rangle=\pi_h\langle g\rangle|_{t=0}=0.
 \end{equation}
 Other types of initialization, for example $\rho_h^0=\Pi_\rho \rho|_{t=0}$ and $g_h^0=\Pi_g g|_{t=0}$ with $\Pi_\rho$ and $\Pi_g$ specified in Section \ref{sec:3.2},  can be considered without any essential change to the results presented in this section.

\subsection{Stability analysis}
\label{sec:3.1}


To prove stability, we first present two lemmas. In particular, Lemma \ref{lem:0} is a discrete analogue of the property $\langle g\rangle=0$ for the exact solution. It plays an important role in both stability analysis and error estimates.
\begin{lem}
\label{lem:0} The numerical solution $g_h^n$ has the following property
\beq
\langle g_h^{n} \rangle=0, \quad \forall n.
\label{eq:lem0:1}
\eeq
\end{lem}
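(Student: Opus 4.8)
The plan is to argue by induction on the time level $n$, propagating the identity $\langle g_h^n\rangle=0$ one step at a time. The base case $n=0$ is precisely \eqref{eq:a:gh0}, which holds because the scheme is initialized with the $L^2$ projection and projection commutes with the velocity averaging. For the inductive step I assume $\langle g_h^n\rangle=0$ and seek to deduce $\langle g_h^{n+1}\rangle=0$. The driving idea is that the micro equation \eqref{eq:FDG:1T:g} was engineered so that its velocity average decouples into a closed, dissipative relation for $\langle g_h^{n+1}\rangle$ alone.

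Concretely, I apply the averaging operator $\langle\cdot\rangle$ to every term of \eqref{eq:FDG:1T:g}. Since the test function $\psi\in U_h^k$ is independent of $v$ and the spatial inner product $(\cdot,\cdot)$ is linear, $\langle\cdot\rangle$ commutes with $(\cdot,\cdot)$ and, by Fubini, with the element integrals and jump evaluations defining the bilinear forms. Two structural cancellations then occur, each reflecting a design feature of the method. The stiff convection term disappears because of the $(\mathbf I-\Pi)$ projection built into $b_{h,v}$ in \eqref{eq:bh}: writing $b_{h,v}(g_h^n,\psi)=(\mD_h(g_h^n;v),\psi)-(\langle\mD_h(g_h^n;v)\rangle,\psi)$, averaging in $v$, and using $\langle 1\rangle=1$ for both models gives $\langle b_{h,v}(g_h^n,\psi)\rangle=0$. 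The term carrying $\rho_h^{n+1}$ disappears because $d_h(\rho_h^{n+1},\psi)$ is $v$-independent, so averaging pulls out the factor $\langle v\rangle$, which is $0$ for the telegraph model and for slab geometry alike.

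After these cancellations the averaged equation reduces to the clean relation
\[
\left(\frac{\langle g_h^{n+1}\rangle - \langle g_h^n\rangle}{\Dt},\psi\right) = -\frac{1}{\eps^2}\left(\langle g_h^{n+1}\rangle,\psi\right), \qquad \forall\,\psi\in U_h^k.
\]
Invoking the inductive hypothesis $\langle g_h^n\rangle=0$ and collecting terms yields $\left(\tfrac{1}{\Dt}+\tfrac{1}{\eps^2}\right)\left(\langle g_h^{n+1}\rangle,\psi\right)=0$ for all $\psi\in U_h^k$. The concluding step uses that $\langle g_h^{n+1}\rangle$ is itself admissible as a test function: averaging in $v$ preserves, at each $x$, membership in the finite-dimensional space $U_h^k$, so $\langle g_h^{n+1}\rangle\in U_h^k$. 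Choosing $\psi=\langle g_h^{n+1}\rangle$ and using the strict positivity of the coefficient $\tfrac{1}{\Dt}+\tfrac{1}{\eps^2}$ forces $\|\langle g_h^{n+1}\rangle\|=0$, hence $\langle g_h^{n+1}\rangle=0$, completing the induction.

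I do not expect a serious obstacle here; the argument is essentially a bookkeeping verification that the scheme respects the continuous constraint $\langle g\rangle=0$ by construction. The only points demanding care are justifying the interchange of $\langle\cdot\rangle$ with the spatial integrals and interface terms in $b_{h,v}$ and $d_h$ (immediate by linearity and Fubini), and confirming that the two defining identities $\langle v\rangle=0$ and $\langle 1\rangle=1$ hold uniformly so that the single proof simultaneously covers the discrete-velocity and continuous-velocity cases.
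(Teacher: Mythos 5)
Your proof is correct and follows essentially the same route as the paper: average the micro equation \eqref{eq:FDG:1T:g} in $v$, use $\langle v\rangle=0$ and the $(\mathbf{I}-\Pi)$ structure of $b_{h,v}$ to kill the convection and $\rho$ terms, note $\langle g_h^{n+1}\rangle\in U_h^k$, and test with it to conclude. The only cosmetic difference is that the paper first derives the pointwise recursion $\langle g_h^{n+1}\rangle=\frac{\eps^2}{\eps^2+\Dt}\langle g_h^n\rangle$ and then invokes $\langle g_h^0\rangle=0$, whereas you insert the inductive hypothesis before choosing the test function; the two are equivalent.
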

\begin{proof}
Take $\forall \psi\in U_h^k$ in \eqref{eq:FDG:1T:g}, integrate over $v$, with $\langle v \rangle=0$, one has
\beq
\langle \left(\frac{g_h^{n+1}-g_h^n}{\Dt}, \psi\right) \rangle =
 -\frac{1}{\eps^2} \langle (g_h^{n+1}, \psi)\rangle.\notag
\eeq
This can be further organized into
\beq
(\langle g_h^{n+1} \rangle, \psi)=\frac{\eps^2}{\eps^2+\Dt} (\langle g_h^n \rangle, \psi), \forall \psi\in U_h^k.
\label{eq:lem0:3}
\eeq
By taking $\psi=\langle g_h^{n+1} \rangle - \frac{\eps^2}{\eps^2+\Dt} \langle g_h^n \rangle \in U_h^k$, in addition to the fact $\langle g_h^0\rangle=0$ in \eqref{eq:a:gh0}, one can conclude
$\langle g_h^{n+1} \rangle = \frac{\eps^2}{\eps^2+\Dt} \langle g_h^n \rangle$ hence \eqref{eq:lem0:1}.
\end{proof}


\bigskip
\begin{lem}
\label{lem:1}
Let $n$ and $m$ be any non-negative integer indices. For any $\phi^n(\cdot)$, $\phi^{n+1}(\cdot)$, $\psi^m(\cdot,v)$, $\psi^{m+1}(\cdot,v) \in U_h^k$ satisfying $\langle \psi^m \rangle=\langle \psi^{m+1} \rangle=0$, denote
\begin{align}
\Xi_\eps(\phi^{n+1},  \phi^n, \psi^{m+1}, \psi^m):&=
\left(\frac{\phi^{n+1}-\phi^n}{\Dt}, \phi^{n+1}\right) + a_h(\psi^{m+1}, \phi^{n+1})\\
&+\langle \left(\frac{\psi^{m+1}-\psi^m}{\Dt}, \eps^2\psi^{m+1}\right)+\frac{1}{\eps} b_{h,v}(\psi^m, \eps^2\psi^{m+1})-\frac{v}{\eps^2} d_h(\phi^n, \eps^2\psi^{m+1})\rangle,\notag
\end{align}
the following estimates then hold with any of the numerical flux in  \eqref{eq:flux},
\begin{align}
&\Xi_\eps(\phi^{n+1}, \phi^n, \psi^{m+1}, \psi^m) \geq \frac{1}{2\Dt}\left((||\phi^{n+1}||^2+\eps^2|||\psi^{m+1}|||^2)-(||\phi^n||^2+\eps^2|||\psi^m|||^2)\right)  \label{lem:1-0}\\
&+ \left\{
 \begin{array}{ll}
 (\eps-\alpha_2 \frac{\Dt}{h})\langle\frac{|v|}{2}\sum_i [\psi^{m+1}]_\iL^2\rangle  -\alpha_1 \frac{\Dt}{h^2} |||\psi^{m+1}|||^2, &\textrm{for}\; k\geq 1\\
 (\eps-\frac{\alpha_2}{2}\frac{\Dt}{h})\langle\frac{|v|}{2}\sum_i [\psi^{m+1}]_\iL^2\rangle, &\textrm{for}\; k=0,
\end{array}
\right.\notag
\end{align}
with $\alpha_i, i=1, 2, 3$ defined in \eqref{eq:alpha}.
\end{lem}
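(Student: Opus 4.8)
The plan is to estimate each of the five contributions to $\Xi_\eps$ separately and then recombine. For the two difference-quotient terms I would use the elementary identity $(a-b,a)=\tfrac12(||a||^2-||b||^2+||a-b||^2)$, which gives
\[
\left(\frac{\phi^{n+1}-\phi^n}{\Dt},\phi^{n+1}\right)=\frac{1}{2\Dt}\left(||\phi^{n+1}||^2-||\phi^n||^2+||\phi^{n+1}-\phi^n||^2\right),
\]
together with the analogous identity for the $v$-averaged, $\eps^2$-weighted $\psi$-term. These yield the telescoping energy in \eqref{lem:1-0}. The decisive point is that I would \emph{retain}, rather than discard, the two nonnegative remainders $\tfrac{1}{2\Dt}||\phi^{n+1}-\phi^n||^2$ and $\tfrac{\eps^2}{2\Dt}|||\psi^{m+1}-\psi^m|||^2$, since they are exactly what will absorb the errors generated by the explicit, time-lagged treatment of the transport terms.

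Next I would treat the convective coupling $a_h(\psi^{m+1},\phi^{n+1})-\langle v\,d_h(\phi^n,\psi^{m+1})\rangle$. Writing $q=\langle v\psi^{m+1}\rangle$ and integrating by parts elementwise under the periodic boundary conditions, the volume parts of $a_h$ and $\langle v\,d_h\rangle$ combine through the product rule into interface jumps $\sum_i[q\phi]_\iL$; expanding these with $[ab]=\{a\}[b]+\{b\}[a]$ and inserting the flux definitions \eqref{eq:flux} shows that, for \emph{each} of the three flux choices, the same-time-level combination vanishes identically,
\[
a_h(\psi^{m+1},\phi^{n+1})=\langle v\,d_h(\phi^{n+1},\psi^{m+1})\rangle .
\]
For the alternating fluxes the two residual jump sums $\pm\tfrac12\sum_i[q]_\iL[\phi]_\iL$ cancel, while for the central flux each residual vanishes separately. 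Consequently, through the mismatch $\phi^n=\phi^{n+1}-(\phi^{n+1}-\phi^n)$, the convective coupling reduces to the single error term $\langle v\,d_h(\phi^{n+1}-\phi^n,\psi^{m+1})\rangle$.

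For the micro term I would first use $\langle\psi^{m+1}\rangle=0$ to drop the projection in \eqref{eq:bh}, so that $\tfrac1\eps b_{h,v}(\psi^m,\eps^2\psi^{m+1})=\eps\langle(\mD_h(\psi^m;v),\psi^{m+1})\rangle$. Splitting $\psi^m=\psi^{m+1}-(\psi^{m+1}-\psi^m)$, the diagonal part is the standard upwind DG form for $v\df_x$, which by summation by parts and the definition \eqref{eq:vg:upwind:L-1} of $\widetilde{vg}$ equals the dissipative jump contribution $\eps\langle\tfrac{|v|}{2}\sum_i[\psi^{m+1}]_\iL^2\rangle\ge0$ appearing in \eqref{lem:1-0}; the off-diagonal part $-\eps\langle(\mD_h(\psi^{m+1}-\psi^m;v),\psi^{m+1})\rangle$ is a second error term.

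It remains to bound the two error terms, which is where the real work lies. For $\langle v\,d_h(\phi^{n+1}-\phi^n,\psi^{m+1})\rangle$ and for $\eps\langle(\mD_h(\psi^{m+1}-\psi^m;v),\psi^{m+1})\rangle$ I would apply the inverse inequalities \eqref{eq:inv} to trade $\df_x\psi^{m+1}$ and the interface values and jumps of $\psi^{m+1}$ for factors $h^{-1}$ and $h^{-1/2}$, and then Young's inequality with a parameter of order $\Dt/h$, chosen precisely so that the resulting $||\phi^{n+1}-\phi^n||^2$ and $\eps^2|||\psi^{m+1}-\psi^m|||^2$ pieces are exactly soaked up by the two retained remainders from the first step. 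What survives are (i) a volume error of size $O(\Dt/h^2)\,|||\psi^{m+1}|||^2$ coming from the $\df_x\psi^{m+1}$ factors, which assembles the $\alpha_1$ term through $||v||_\infty$, $\langle v^2\rangle$ and $\hat C_{\textrm{inv}}$, and (ii) a reduction of the jump dissipation of size $O(\Dt/h)\langle\tfrac{|v|}{2}\sum_i[\psi^{m+1}]_\iL^2\rangle$, which assembles the $\alpha_2$ term through $||v||_\infty$, $\langle|v|\rangle$ and $C_{\textrm{inv}}$, giving the factor $(\eps-\alpha_2\tfrac{\Dt}{h})$. When $k=0$ the derivative factors are absent, so the volume error (the $\alpha_1$ term) disappears and the whole of each retained remainder is available for the flux errors, which sharpens the jump-reduction constant to $\tfrac{\alpha_2}{2}$; this is exactly the dichotomy recorded in \eqref{lem:1-0}. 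I expect the main obstacle to be precisely the time-level mismatch forced by the explicit transport discretization — it is what necessitates the non-conventional energy, and the careful allocation of the two retained difference remainders across the volume and flux errors of both transport terms is the crux of recovering the stated constants $\alpha_1,\alpha_2$.
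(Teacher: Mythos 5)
Your proposal is correct and follows essentially the same route as the paper's proof: the same energy identity with retained difference remainders, the same interface-identity cancellation $a_h(\psi^{m+1},\phi^{n+1})=\langle v\,d_h(\phi^{n+1},\psi^{m+1})\rangle$ for all three fluxes, the same splitting of $b_{h,v}$ into upwind jump dissipation plus a time-lag error, and the same Young/inverse-inequality absorption yielding the $\alpha_1,\alpha_2$ constants and the sharper $k=0$ case. No gaps.
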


\begin{proof}
Note that
\begin{align}
 \label{lem:1-1}
&\Xi_\eps(\phi^{n+1}, \phi^n, \psi^{m+1}, \psi^m)\\
&=\frac{1}{2\Dt}\left(||\phi^{n+1}||^2-||\phi^n||^2+||\phi^{n+1}-\phi^n||^2 \right) + \frac{\eps^2}{2\Dt} (|||\psi^{m+1}|||^2- |||\psi^m|||^2+|||\psi^{m+1}-\psi^m|||^2) \notag\\
&+ a_h(\psi^{m+1}, \phi^{n+1})+\eps \langle b_{h,v}(\psi^m, \psi^{m+1})\rangle -\langle v d_h(\phi^{n+1}, \psi^{m+1})\rangle
 + \langle v d_h(\phi^{n+1}-\phi^n, \psi^{m+1}) \rangle.\notag
\end{align}
Based on the definitions of the bilinear forms $a_h(\cdot, \cdot)$ 
 and $d_h(\cdot, \cdot)$, one has
\begin{align}
a_h&(\psi^{m+1}, \phi^{n+1})- \langle v d_h(\phi^{n+1}, \psi^{m+1})\rangle \notag\\
=&-\sum_i\int_{I_i} \df_x \left(\langle v\psi^{m+1}\rangle \phi^{n+1}\right) dx-\sum_i \widehat{\langle v\psi^{m+1}\rangle}_{\iL} [\phi^{n+1}]_{\iL}
 - \sum_i\widehat{\phi^{n+1}_\iL}[\langle v \psi^{m+1} \rangle]_\iL\notag\\
=& \sum_i\left([\langle v\psi^{m+1}\rangle \phi^{n+1}]-\widehat{\langle v\psi^{m+1}\rangle}[\phi^{n+1}]
       -\widehat{\phi^{n+1}}[\langle v \psi^{m+1} \rangle]\right)_\iL=0.
\label{lem:1-2}
\end{align}
The last equality can be verified directly with the definition of central and alternating fluxes. In addition,  with $\langle\psi^{m+1}\rangle=0$ and equation \eqref{eq:mD} and the upwind flux \eqref{eq:vg:upwind:L-1},
\begin{align}
&\langle b_{h,v}(\psi^m, \psi^{m+1})\rangle= \langle(\mD_h(\psi^m; v) - \langle\mD_h(\psi^m; v)\rangle, \psi^{m+1})\rangle\notag\\
&= \langle(\mD_h(\psi^m; v), \psi^{m+1})\rangle-(\langle \mD_h(\psi^m; v)\rangle, \langle\psi^{m+1}\rangle)= \langle(\mD_h(\psi^m; v), \psi^{m+1})\rangle \notag\\
&= \langle(\mD_h(\psi^{m+1}; v), \psi^{m+1})\rangle+\langle(\mD_h(\psi^m-\psi^{m+1}; v), \psi^{m+1})\rangle\notag\\
&=-\sum_i\left(\langle \int_{I_i} v\psi^{m+1}\df_x\psi^{m+1} dx + \widetilde{(v\psi^{m+1})}_\iL[\psi^{m+1}]_\iL\rangle\right)+\langle(\mD_h(\psi^m-\psi^{m+1}; v), \psi^{m+1})\rangle\notag\\
&=\langle \frac{|v|}{2}\sum_i [\psi^{m+1}]^2_\iL\rangle+\langle(\mD_h(\psi^m-\psi^{m+1}; v), \psi^{m+1})\rangle .
\label{lem:1-3}
\end{align}
Next we want to estimate $\langle(\mD_h(\psi^m-\psi^{m+1}; v), \psi^{m+1})\rangle=\Lambda_1+\Lambda_2$  in \eqref{lem:1-3}, where
\begin{equation*}
\Lambda_1= \langle \sum_i\int_{I_i} v(\psi^{m+1}-\psi^m)\df_x\psi^{m+1} dx \rangle, \qquad\Lambda_2=\langle \sum_i \widetilde{(v(\psi^{m+1}-\psi^m))}_\iL[\psi^{m+1}]_\iL\rangle.
\end{equation*}
These two terms can be bounded as follows.
\beq
|\Lambda_1|\leq \theta_1|||\psi^{m+1}-\psi^m|||^2+\frac{1}{4\theta_1} \langle \int_\Ox (v \df_x \psi^{m+1})^2 dx\rangle,
\label{lem:1-4}
\eeq
\begin{align}
|\Lambda_2|
=&|\langle \sum_i v(\psi^{m+1}-\psi^m)_\iL^{-}[\psi^{m+1}]_\iL\rangle_{+} +\langle \sum_i v(\psi^{m+1}-\psi^m)_\iL^+[\psi^{m+1}]_\iL\rangle_{-}|\notag\\
\leq& \frac{\eta_1}{C_{\textrm{inv}}} \langle \sum_i h ((\psi^{m+1}-\psi^m)_\iL^{-})^2\rangle_{+}
       +\frac{C_{\textrm{inv}}}{4\eta_1} \langle \sum_i h^{-1} (v[\psi^{m+1}]_\iL)^2\rangle_{+}\notag\\
    & + \frac{\eta_1}{C_{\textrm{inv}}}\langle \sum_i h ((\psi^{m+1}-\psi^m)_\iL^{+})^2\rangle_{-}
       +\frac{C_{\textrm{inv}}}{4\eta_1} \langle \sum_i h^{-1}(v[\psi^{m+1}]_\iL)^2\rangle_{-}\notag\\
\leq& \eta_1|||\psi^{m+1}-\psi^m  |||^2+\frac{ C_{\textrm{inv}}}{4\eta_1} \sum_i h^{-1} \langle (v[\psi^{m+1}]_\iL)^2\rangle.
\label{lem:1-5}
\end{align}
Here the inverse inequality in \eqref{eq:inv:1} is used, and $\langle \cdot \rangle_{+}$ and $\langle \cdot \rangle_{-}$ are integrals with respect to the positive and negative part of $v$, respectively. What we also need to estimate is $\langle v d_h(\phi^{n+1}-\phi^n, \psi^{m+1}) \rangle$.
\begin{align}
\label{lem:1-6}
&|\langle v d_h(\phi^{n+1}-\phi^n, \psi^{m+1}) \rangle|\notag\\
&\leq |\sum_i \int_{I_i}(\phi^{n+1}-\phi^{n}) \df_x\langle v \psi^{m+1}\rangle dx| + |\sum_i \left(\widehat{(\phi^{n+1}-\phi^n)}\langle v[\psi^{m+1}]\rangle \right)_\iL|\notag\\
&\leq  \theta_2 ||\phi^{n+1}-\phi^{n}||^2+\frac{1}{4\theta_2} \int_\Ox (\df_x \langle v \psi^{m+1}\rangle)^2 dx
+\frac{\eta_2}{C_{\textrm{inv}}} \sum_i h (\widehat{\phi^{n+1}-\phi^n})^2_\iL
+\frac{C_{\textrm{inv}}}{4\eta_2}\sum_i h^{-1} \langle v [\psi^{m+1}]_\iL\rangle^2 \notag\\
&\leq  \theta_2 ||\phi^{n+1}-\phi^{n}||^2+\frac{1}{4\theta_2} \int_\Ox (\df_x \langle v \psi^{m+1}\rangle)^2 dx
+\eta_2||\phi^{n+1}-\phi^{n}||^2
+\frac{C_{\textrm{inv}}}{4\eta_2}\sum_i h^{-1} \langle v [\psi^{m+1}]_\iL\rangle^2 \notag\\
&= (\theta_2+\eta_2) ||\phi^{n+1}-\phi^{n}||^2+\frac{1}{4\theta_2} \int_\Ox (\df_x \langle v \psi^{m+1}\rangle)^2 dx
+\frac{C_{\textrm{inv}}}{4\eta_2}\sum_i h^{-1} \langle v [\psi^{m+1}]_\iL\rangle^2 .
\end{align}
Up to now, $\theta_i, \eta_i$ with $i=1, 2$ are arbitrary positive constants.
By further applying inverse inequalities in \eqref{eq:inv:2} and the following simple estimates,
\begin{align*}
\langle \int_\Ox(v \df_x \psi^{m+1})^2 dx\rangle
&\leq ||v||_\infty^2 \langle \int_\Ox(\df_x \psi^{m+1})^2 dx\rangle
 \leq \frac{\hat{C}_{\textrm{inv}}}{h^2} ||v||_\infty^2 |||\psi^{m+1}|||^2,\\
\int_\Ox (\df_x \langle v \psi^{m+1}\rangle)^2 dx
&=\int_\Ox \langle v\df_x \psi^{m+1}\rangle^2 dx
  \leq  \int_\Ox \langle v^2\rangle \langle (\df_x\psi^{m+1})^2\rangle dx\\
&=\langle v^2\rangle \langle\int_\Ox (\df_x\psi^{m+1})^2 dx\rangle
  \leq \frac{\hat{C}_{\textrm{inv}}}{h^2} \langle v^2\rangle |||\psi^{m+1}|||^2,\\
\sum_i\langle (v[\psi^{m+1}]_\iL)^2\rangle
&\leq 2||v||_\infty\langle\frac{|v|}{2}\sum_i [\psi^{m+1}]_\iL^2\rangle ,\\
\sum_i \langle v [\psi^{m+1}]_\iL\rangle^2
&\leq\sum_i \langle |v|\rangle \langle |v|([\psi^{m+1}]_\iL)^2\rangle
= 2\langle|v|\rangle \langle \frac{|v|}{2}\sum_i [\psi^{m+1}]_\iL^2\rangle ,
\end{align*}
we have
\begin{align}
|\langle(\mD_h(\psi^m-\psi^{m+1}; v), \psi^{m+1})\rangle|
\leq & (\theta_1+\eta_1)|||\psi^{m+1}-\psi^m|||^2
 +\frac{\hat{C}_{\textrm{inv}}||v||_\infty^2}{4\theta_1 h^2} |||\psi^{m+1}|||^2\notag\\
 &+\frac{C_{\textrm{inv}}||v||_\infty}{2\eta_1 h} \langle\frac{|v|}{2}\sum_i [\psi^{m+1}]_\iL^2\rangle,\label{lem:1-7}
\end{align}
\begin{align}
|\langle v d_h(\phi^{n+1}-\phi^n, \psi^{m+1}) \rangle|
\leq &(\theta_2+\eta_2) ||\phi^{n+1}-\phi^{n}||^2+\frac{\hat{C}_{\textrm{inv}}\langle v^2\rangle}{4\theta_2h^2} |||\psi^{m+1}|||^2\notag\\
&+\frac{C_{\textrm{inv}}\langle|v|\rangle}{2\eta_2 h} \langle \frac{|v|}{2}\sum_i [\psi^{m+1}]_\iL^2\rangle .\label{lem:1-8}
\end{align}

Estimates in \eqref{lem:1-1}-\eqref{lem:1-3} and  \eqref{lem:1-7}-\eqref{lem:1-8} are now assembled together,
\begin{align}
\label{lem:1-9}
&\Xi_\eps(\phi^{n+1}, \phi^n, \psi^{m+1}, \psi^m)\geq \frac{1}{2\Dt}\left((||\phi^{n+1}||^2+\eps^2|||\psi^{m+1}|||^2)-(||\phi^n||^2+\eps^2|||\psi^m|||^2)\right)\notag\\
&+\left(\frac{1}{2\Dt}-(\theta_2+\eta_2)\right)||\phi^{n+1}-\phi^n||^2 + \left(\frac{\eps^2}{2\Dt}-\eps(\theta_1+\eta_1)\right)|||\psi^{m+1}-\psi^m|||^2 \notag\\
 &+\left(\eps-\frac{\eps C_{\textrm{inv}}||v||_\infty}{2\eta_1 h}-\frac{C_{\textrm{inv}}\langle|v|\rangle}{2\eta_2 h}\right) \langle\frac{|v|}{2}\sum_i [\psi^{m+1}]_\iL^2\rangle\notag\\
 &-\left(\frac{\eps\hat{C}_{\textrm{inv}}||v||_\infty^2}{4\theta_1 h^2}+\frac{\hat{C}_{\textrm{inv}}\langle v^2\rangle}{4\theta_2h^2}\right) |||\psi^{m+1}|||^2.
 \end{align}
One can conclude the lower bound of $\Xi_\eps$  in \eqref{lem:1-0}
by taking $\theta_1=\eta_1=\frac{\eps}{4\Dt}$, $\theta_2=\eta_2=\frac{1}{4\Dt}$ in \eqref{lem:1-9}. When $k=0$, $\df_x\psi^{m+1}=0$ and all terms involving $\theta_1$ and $\theta_2$ are no longer needed in the analysis. The estimate in \eqref{lem:1-9} turns to
\begin{align}
\label{lem:1-10}
&\Xi_\eps(\phi^{n+1}, \phi^n, \psi^{m+1}, \psi^m)\geq \frac{1}{2\Dt}\left((||\phi^{n+1}||^2+\eps^2|||\psi^{m+1}|||^2)-(||\phi^n||^2+\eps^2|||\psi^m|||^2)\right)\notag\\
&+\left(\frac{1}{2\Dt}-\eta_2\right)||\phi^{n+1}-\phi^n||^2 + \left(\frac{\eps^2}{2\Dt}-\eps\eta_1\right)|||\psi^{m+1}-\psi^m|||^2 \notag\\
 &+\left(\eps-\frac{\eps C_{\textrm{inv}}||v||_\infty}{2\eta_1 h}-\frac{C_{\textrm{inv}}\langle|v|\rangle}{2\eta_2 h}\right) \langle\frac{|v|}{2}\sum_i [\psi^{m+1}]_\iL^2\rangle.
 \end{align}
In particular, with $\eta_1=\frac{\eps}{2\Dt}$ and $\eta_2=\frac{1}{2\Dt}$ in \eqref{lem:1-10}, we conclude the estimate for $k=0$.
\end{proof}


\begin{thm}[Stability of DG-IMEX1] When the DG-IMEX1 method \eqref{eq:FDG:1T} is applied to the kinetic transport equation \eqref{f-JH} in its micro-macro decomposition formulation \eqref{eq:mmd:LJ},
the following stability result holds for the numerical solution,
\beq
\label{eq:thm1:0}
||\rho_h^{n+1}||^2+\eps^2|||g_h^n|||^2 \leq ||\rho_h^{n}||^2+\eps^2|||g_h^{n-1}|||^2, \;\forall n
\eeq
under the condition
 \beq
 \label{eq:cfl}
 \Dt\leq \Dt_{stab}=\left\{
  \begin{array}{ll}
  \frac{h}{\alpha_1+\alpha_2\alpha_3} (h+\min(\eps, \frac{\alpha_2h}{\alpha_1})\alpha_3), &\mbox{for}\; k\geq 1,\\
  &\\
   \frac{2h}{\alpha_2 \alpha_3}(h+\alpha_3\eps), &\mbox{for}\; k=0.
  \end{array}
 \right.
 \eeq
Here $\alpha_i, i=1, 2, 3$ are defined in \eqref{eq:alpha}.
\label{eq:thm1}
\end{thm}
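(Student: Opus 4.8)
The plan is to run an energy argument built entirely on Lemma \ref{lem:1}, whose bilinear form $\Xi_\eps$ has been engineered so that, when evaluated at the numerical solution, it simultaneously reproduces the scheme and a telescoping energy. The first step is to choose the arguments correctly. I would take $\phi^{n+1}=\rho_h^{n+1}$, $\phi^n=\rho_h^n$, and — this is the index alignment dictated by the non-conventional energy in \eqref{eq:thm1:0} — $\psi^{m+1}=g_h^n$, $\psi^m=g_h^{n-1}$, i.e. $m=n-1$. By Lemma \ref{lem:0} we have $\langle g_h^{n}\rangle=\langle g_h^{n-1}\rangle=0$, so the hypothesis $\langle\psi^m\rangle=\langle\psi^{m+1}\rangle=0$ of Lemma \ref{lem:1} is met.

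Next I would evaluate $\Xi_\eps(\rho_h^{n+1},\rho_h^n,g_h^n,g_h^{n-1})$ exactly using the scheme. The first two terms, $(\frac{\rho_h^{n+1}-\rho_h^n}{\Dt},\rho_h^{n+1})+a_h(g_h^n,\rho_h^{n+1})$, vanish by \eqref{eq:FDG:1T:r} with test function $\phi=\rho_h^{n+1}$. The remaining $v$-averaged block is precisely the left-hand side of the $g$-update \eqref{eq:FDG:1T:g} advanced from level $n-1$ to $n$ and tested against $\psi=\eps^2 g_h^n$; applying $\langle\cdot\rangle$ to that identity turns its right-hand side into $-\langle(g_h^n,\eps^2 g_h^n)\rangle/\eps^2=-|||g_h^n|||^2$. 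Hence $\Xi_\eps=-|||g_h^n|||^2$. The choice $m=n-1$ is the key idea that makes the mixed energy $||\rho_h^{n+1}||^2+\eps^2|||g_h^n|||^2$ telescope; this is a matter of correct bookkeeping rather than of estimation.

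Combining this exact value with the lower bound \eqref{lem:1-0} and writing $\mathcal E^{n+1}=||\rho_h^{n+1}||^2+\eps^2|||g_h^n|||^2$, I obtain
\[
\mathcal E^{n+1}-\mathcal E^{n}\le -2\Dt\,\big(|||g_h^n|||^2+R\big),
\]
where $R$ is the bracketed remainder of Lemma \ref{lem:1}. Stability \eqref{eq:thm1:0} then reduces to the scalar inequality $|||g_h^n|||^2+R\ge0$. The one extra ingredient I would record is the inverse-inequality estimate $\langle\frac{|v|}{2}\sum_i[\psi]_\iL^2\rangle\le \frac{\alpha_3}{h}|||\psi|||^2$, which follows from \eqref{eq:inv:1} and is exactly where $\alpha_3=2||v||_\infty C_{\textrm{inv}}$ enters.

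Finally I would carry out the CFL extraction, which I expect to be the main obstacle, since it is where the precise shape of $\Dt_{stab}$ — and in particular the factor $\min(\eps,\alpha_2 h/\alpha_1)$ — is forced. Writing $G=|||g_h^n|||^2$ and $J=\langle\frac{|v|}{2}\sum_i[g_h^n]_\iL^2\rangle\ge0$, the quantity to control for $k\ge1$ is $(1-\alpha_1\frac{\Dt}{h^2})G+(\eps-\alpha_2\frac{\Dt}{h})J$. When the jump coefficient $\eps-\alpha_2\frac{\Dt}{h}$ is nonnegative I would simply discard $J$ and require $\Dt\le h^2/\alpha_1$; when it is negative I would bound $J\le\frac{\alpha_3}{h}G$, converting the requirement into $\Dt\le \frac{h}{\alpha_1+\alpha_2\alpha_3}(h+\eps\alpha_3)$. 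A short comparison of these two thresholds against $\eps h/\alpha_2$ shows both regimes are covered simultaneously by the single bound in \eqref{eq:cfl}, with the crossover at $\eps=\alpha_2 h/\alpha_1$ producing the stated minimum. The case $k=0$ is the same but simpler: the $\alpha_1$ term is absent because $\df_x\psi=0$ on $P^0$, and the identical dichotomy yields $\Dt\le\frac{2h}{\alpha_2\alpha_3}(h+\alpha_3\eps)$. Iterating $\mathcal E^{n+1}\le\mathcal E^n$ then gives the result.
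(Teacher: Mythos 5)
Your proposal is correct and follows essentially the same route as the paper's proof: the same choice of test functions $\phi=\rho_h^{n+1}$, $\psi=\eps^2 g_h^n$ with the index shift $m=n-1$, the same evaluation $\Xi_\eps=-|||g_h^n|||^2$ via the scheme and Lemmas \ref{lem:0} and \ref{lem:1}, the same jump bound $\langle\frac{|v|}{2}\sum_i[g_h^n]_\iL^2\rangle\leq\frac{\alpha_3}{h}|||g_h^n|||^2$, and the identical two-regime extraction of the CFL condition with crossover at $\eps=\alpha_2 h/\alpha_1$. No gaps; your bookkeeping of the sign dichotomy for the jump coefficient matches the paper's case analysis exactly.
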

\begin{proof}
Take $\phi=\rho_h^{n+1}$ in \eqref{eq:FDG:1T:r}. Additionally  take $\psi=\eps^2 g_h^{n+1}$ in \eqref{eq:FDG:1T:g}, integrate the equation over $v$, and shift the index $n$ to $n-1$. This leads to
\begin{subequations}
\label{eq:thm1:1}
\begin{align}
&\left(\frac{\rho_h^{n+1}-\rho_h^n}{\Dt}, \rho_h^{n+1}\right) + a_h(g_h^n, \rho_h^{n+1})=0, \label{eq:thm1:1:r}\\
&\langle \left(\frac{g_h^n-g_h^{n-1}}{\Dt}, \eps^2 g_h^n\right)+ \frac{1}{\eps} b_{h,v}(g_h^{n-1}, \eps^2 g_h^n)-\frac{v}{\eps^2} d_h(\rho_h^n, \eps^2 g_h^n) \rangle =
 -\langle \frac{1}{\eps^2}(g_h^n, \eps^2 g_h^n)\rangle.
 \label{eq:thm1:1:g}
\end{align}
\end{subequations}
Now we sum up equations \eqref{eq:thm1:1:r}-\eqref{eq:thm1:1:g}, denote the left side of the resulting equation as $LHS$, and get
\beq
\label{eq:thm1:2}
LHS=-|||g_h^n|||^2.
 \eeq
Let's first consider $k\geq 1$. For any integer index $n\geq 1$, by applying Lemma \ref{lem:1} with $\phi=\rho_h$, $\psi=g_h$, and  $m=n-1$, one gets
\begin{align}
LHS =\; \Xi_\eps(\rho_h^{n+1}, \rho_h^{n}, g_h^n, g_h^{n-1})
\geq& \frac{1}{2\Dt}\left((||\rho_h^{n+1}||^2+\eps^2|||g_h^n|||^2)-(||\rho_h^{n}||^2+\eps^2|||g_h^{n-1}|||^2)\right) \notag \\
  &+ \left(\eps-\alpha_2\frac{\Dt}{h}\right)\langle\frac{|v|}{2}\sum_i [g_h^n]_\iL^2\rangle  -\alpha_1 \frac{\Dt}{h^2} |||g_h^n|||^2.  \label{eq:thm1:3}
\end{align}
Note that
\beq
\langle\frac{|v|}{2}\sum_i [g_h^n]_\iL^2\rangle
\leq ||v||_\infty\langle \sum_i (g_{h,\iL}^{n,+})^2+(g_{h,\iL}^{n,-})^2\rangle
\leq  \frac{2 ||v||_\infty C_{\textrm{inv}}}{h} |||g_h^{n}|||^2 = \frac{\alpha_3}{h} |||g_h^{n}|||^2.\label{eq:thm1:4}
\eeq
Combining this estimate with \eqref{eq:thm1:2}-\eqref{eq:thm1:3}, one has
\beq
\label{eq:thm1:5}
\frac{1}{2\Dt}\left((||\rho_h^{n+1}||^2+\eps^2|||g_h^n|||^2)-(||\rho_h^{n}||^2+\eps^2|||g_h^{n-1}|||^2)\right)\leq \gamma |||g_h^n|||^2,
\eeq
where
\begin{equation}
\label{eq:gamma}
\gamma=\left(-1+ \alpha_1 \frac{\Dt}{h^2}\right)
+\max\left(-\eps+\alpha_2 \frac{\Dt}{h}, 0\right)\frac{\alpha_3}{h}\;.
\end{equation}
  The stability result \eqref{eq:thm1:0} can be obtained now as long as $\gamma\leq 0$. Equivalently, this requires
\beq
-\eps+\alpha_2 \frac{\Dt}{h} \leq 0 \;\;\mbox{and}\; -1+ \alpha_1 \frac{\Dt}{h^2} \leq 0,
\eeq
or
\beq
-\eps+\alpha_2 \frac{\Dt}{h} > 0 \;\;\mbox{and}\; -1+ \alpha_1 \frac{\Dt}{h^2} + (-\eps+\alpha_2 \frac{\Dt}{h})\frac{\alpha_3}{h} \leq 0\;.
\eeq
 These conditions can be reformulated into

 \bigskip
 {\em Case 1:} when $\frac{\eps h}{\alpha_2} \leq \frac{h^2}{\alpha_1}$, that is $\eps\leq \frac{\alpha_2}{\alpha_1} h$, then
 $\Dt \leq \frac{h^2+\alpha_3 \eps h }{\alpha_1+\alpha_2\alpha_3}$.

 {\em Case 2:} when $\eps > \frac{\alpha_2}{\alpha_1} h$, then $\Dt \leq \frac{h^2}{\alpha_1}$.

\bigskip
 Conditions in both cases  can be compactly written into
 \beq
 \label{eq:Dt}
     \Dt \leq \frac{h}{\alpha_1+\alpha_2\alpha_3} (h+\min(\eps, \frac{\alpha_2h}{\alpha_1})\alpha_3).
 \eeq

 Finally, we consider $k=0$. By following similar analysis as above using the corresponding result in
 Lemma \ref{lem:1}, one will have \eqref{eq:thm1:5} where
\beq
\gamma=-1 +\max\left(-\eps+ \frac{\alpha_2}{2}\frac{\Dt}{h}, 0\right)\frac{\alpha_3}{h}.
\eeq
To conclude the stability result \eqref{eq:thm1:0}, it is required to have $\gamma\leq 0$ which is equivalent to $-1 +\left(-\eps+ \frac{\alpha_2}{2}\frac{\Dt}{h}\right)\frac{\alpha_3}{h}\leq 0$. This condition can be further simplified into $\Dt \leq \frac{2h}{\alpha_2 \alpha_3}(h+\alpha_3\eps)$. This completes the proof.
\end{proof}

Note that the stability is measured in a non-conventional way in that the discrete energy in \eqref{eq:thm1:0} at the $n$-th step consists of the $L^2$ norm of $\rho_h^{n}$, and the $L^2$ norm of $g_h^{n-1}$. This is due to the  implicit-explicit strategy used in the numerical formulation. One can refer to  \cite{lemou2010new} \cite{liu2010analysis} for a different implicit-explicit strategy  used in a first order scheme, hence the stability analysis with different discrete energy. By working out the expression of $\alpha_i, i=1, 2, 3$ in \eqref{eq:alpha} for the specific equations in Section \ref{sec2}, we further have the following remark.
%
\begin{rem}
The condition \eqref{eq:cfl} is
 \beq
 \label{eq:cfl:1}
 \Dt\leq \Dt_{stab}=\left\{
  \begin{array}{ll}
  \frac{1}{2\hat{C}_\textrm{inv}+8{C}^2_\textrm{inv}} (h+2{C}_\textrm{inv} \min(\eps, \frac{2C_\textrm{inv}}{\hat{C}_\textrm{inv}}h)) h,&\mbox{for}\; k\geq 1,\\
    \frac{h}{4C^2_\textrm{inv}} (h+2C_\textrm{inv}\eps), &\mbox{for}\; k=0.
  \end{array}
 \right.
 \eeq
for the telegraph equation, and it is
 \beq
 \label{eq:cfl:2}
 \Dt\leq \Dt_{stab}=\left\{
  \begin{array}{ll}
  \frac{1}{\frac{4}{3}\hat{C}_\textrm{inv}+6{C}^2_\textrm{inv}} (h+2 C_\textrm{inv} \min(\eps, \frac{9}{4}\frac{C_\textrm{inv}}{\hat{C}_\textrm{inv}}h)) h,&\mbox{for}\; k\geq 1,\\
  \frac{h}{3C^2_\textrm{inv}} (h+2C_\textrm{inv}\eps), &\mbox{for}\; k=0.
  \end{array}
 \right.
 \eeq
for the one-group transport equation in slab geometry.
\end{rem}

\begin{rem}
\label{rem:3.5}
\begin{itemize}
\item The stability condition in \eqref{eq:cfl} for the DG-IMEX1  is  established {\em uniformly} with respect to $\eps$ for any given integer $k\geq 0$.
\item  When  $k=0$, one has ${C}_\textrm{inv}=1$. (The actual values of ${C}_\textrm{inv}$ and $\hat{C}_\textrm{inv}$ for $k\geq 1$ can be found in  \cite{warburton2008taming, reyna2013bound}.)
  The stability condition in \eqref{eq:cfl:1} becomes $\Dt\leq  \frac{1}{4} h^2+ \frac{1}{2} \eps h$, and the one in \eqref{eq:cfl:2} is
     $\Dt\leq  \frac{1}{3} h^2+ \frac{2}{3} \eps h$. These results are the same as that of the first order finite difference method introduced in \cite{lemou2010new} on staggered grids which employs a different implicit-explicit strategy in discretization (see \cite{lemou2010new} for the telegraph equation and \cite{liu2010analysis} for the one-group transport equation in slab geometry). Moreover,  when $\eps$ is small with the equation in the diffusive regime, $\Dt=O(h^2)$; when $\eps$ is large with the equation in the convective regime, $\Dt=O(\eps h)$. The timestep restrictions in both regimes are standard for explicit schemes.

\item For  $k\geq 1$, the restriction on the time step is $\Dt=O(h^2)$, which is reasonable for $\eps\ll1$,  yet not the most desired condition for $\eps=O(1)$ in the convective regime. Similar as in \cite{zhang2010stability} for analyzing DG methods with explicit Runge-Kutta time discretizations, we conjecture that higher order time discretizations will improve or overcome the restrictive condition on the time step when $\eps=O(1)$. This will be left to our future investigation.
\item  For the telegraph equation, with its special discrete velocity space  $v\in \{-1, 1\}$,  one can verify that $(\mathbf{I}-\Pi)(v\partial_xg)=v\langle \partial_x g\rangle=v\partial_x \langle g\rangle$ holds. Since the exact solution satisfies $\langle g\rangle=0$, it seems one does not need to include $(\mathbf{I}-\Pi)(v\partial_xg)$ in the micro-macro formulation \eqref{eq:mmd:LJ}. Numerically this means not to include $b_{h,v}$ term in \eqref{eq:SDG:c} and in \eqref{eq:FDG:1T}. For the resulting scheme, we can follow the similar analysis as in this subsection and obtain the stability result \eqref{eq:thm1:0} under the condition
  \beq
 \label{eq:cfl:1:00}
 \Dt\leq \Dt_{stab}=\left\{
  \begin{array}{ll}
  \frac{1}{\hat{C}_\textrm{inv}+4 C^2_\textrm{inv}} h^2, &\mbox{for}\; k\geq 1,\\
    \frac{1}{2C^2_\textrm{inv}}h^2 =\frac{1}{2} h^2, &\mbox{for}\; k=0.
  \end{array}
 \right.
 \eeq
The time step constraint for stability is no longer reflecting the ``multi-scale'' aspect of the equation, and it is independent of $\eps$ and always $\Dt=O(h^2)$, even in the convective regime  with $\eps=O(1)$ and when $k=0$. This shows the importance of including the term $(\mathbf{I}-\Pi)(v\partial_xg)$ in the design of numerical methods.
\end{itemize}
\end{rem}

\subsection{Error estimates}
\label{sec:3.2}

In this subsection, error estimates are carried out for the proposed method \eqref{eq:FDG:1T} to solve the kinetic transport equation \eqref{f-JH}  in its micro-macro formulation \eqref{eq:mmd:LJ} with smooth exact solutions at any given time $T: 0<T<\infty$. Let $\rho^n$ and $g^n$ be the exact solution at time $t^n=n\Dt$, $\forall n$.  Let $\Pi_\rho$ and $\Pi_g$ denote two orthogonal projections onto $U_h^k$ which will be specified later. Define the error function in $\rho$, $e_\rho^n=\rho^n-\rho_h^n=\xi_\rho^n-\eta_\rho^n$ where $\xi_\rho^n=\Pi_\rho\rho^n-\rho_h^n$ and $\eta_\rho^n=\Pi_\rho\rho^n-\rho^n$.  Similarly, $e_g^n=g^n-g_h^n=\xi_g^n-\eta_g^n$ where $\xi_g^n=\Pi_gg^n-g_h^n$ and $\eta_g^n=\Pi_gg^n-g^n$. We also denote
$\mE_n=||\xi_\rho^n||^2+\eps^2|||\xi_g^{n-1}|||^2$. Both $\eta_\rho^n$ and $\eta_g^n$ can be estimated in a standard way based on the definitions of $\Pi_\rho$, $\Pi_g$, and $U_h^k$ (see the beginning of Section \ref{sec:analysis} regarding the property of projections), therefore the error estimates for the proposed methods boil down to the estimation of $\xi_\rho^n$ and $\xi_g^n$, $\forall n$.   Throughout this subsection, we use $C, C_*>0$ to denote generic constants. Here $C$ only depends on $k$; $C_*$ is independent of $h$, $\Dt$ and $n$ and depends on $k$, $T$, $||v||_\infty$ and some Sobolev norms of the exact solutions, more specifically, an upper bound of
$$||\partial_{tt} \rho||, ||\partial_{xt} \rho||, ||\rho||_{H^{k+1}(\Omega_x)},
||\partial_t\rho||_{H^{k+1}(\Omega_x)}, |||\partial_{tt} g|||, \langle||g||^2_{H^{k+1}(\Omega_x)}\rangle,
\langle||\partial_t g||^2_{H^{k+1}(\Omega_x)}\rangle$$ over $t\in[0,T]$.
Different occurrences of $C, C_*$ could take different values. Standard notations for Sobolev spaces $H^{k+1}(\Omega_x)$ as well as their norms $||\cdot||_{H^{k+1}(\Omega_x)}$ are used in this paper \cite{ciarlet:book}.

Recall that at $t=0$, the proposed methods are initialized through $\rho_h^0=\pi_h\rho^0$ and $g_h^0=\pi_h g^0$.
With this and Lemma \ref{lem:0}, we have $\langle \xi_g^n\rangle=\langle\Pi_g g^n\rangle -\langle g_h^n\rangle=\Pi_g\langle g^n\rangle=0 $, hence
\begin{equation}\label{eq:a:xign}
\langle \xi_g^n\rangle=0, \;\;\;\;\forall n.
\end{equation}
  In next Theorem, we will state the main error estimate results, and their proofs will be given step by step in Sections \ref{sec:3.2.1}-\ref{sec:3.2.5}.

\begin{thm}[Error estimate]  When the DG-IMEX1 method \eqref{eq:FDG:1T} is applied to the kinetic transport equation \eqref{f-JH} in its micro-macro decomposition formulation \eqref{eq:mmd:LJ},
the following error estimates hold:
\begin{enumerate}
\item[(1)] with any of the alternating flux in \eqref{eq:flux},
\begin{align}
&||\rho^n-\rho_h^{n}||^2+\eps^2|||g^{n-1}-g_h^{n-1}|||^2 \notag\\
&\leq C_*\left((1+\eps^2)h^{2k+2}+\Dt^2 +\frac{1}{1-\sigma}((1+\eps^4)\Dt^2+h^{2k+2}+\eps h^{2k+1})\right);
\end{align}
\item[(2)] with any of the central flux in \eqref{eq:flux},
\begin{align}
&||\rho^n-\rho_h^{n}||^2+\eps^2|||g^{n-1}-g_h^{n-1}|||^2 \notag\\
&\leq C_*\left(\eps^2 h^{2k+2}+h^{2k}+\Dt^2 +\frac{1}{1-\sigma} ((1+\eps^4)\Dt^2+h^{2k}+\eps h^{2k+1})\right)
\end{align}
\end{enumerate}
for $n: n\Dt \leq T$ under the condition $\Dt\leq \sigma\Dt_{\textrm{stab}}$ and $\Dt<\frac{1}{2}$. Here $\sigma$ is any constant in $(0,1)$.
\label{thm:main}
\end{thm}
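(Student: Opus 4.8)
The plan is to carry out a standard energy-based error analysis, now driven by the coercivity estimate of Lemma~\ref{lem:1} (the same bilinear form $\Xi_\eps$ that powered the stability proof) together with the approximation bounds~\eqref{eq:pi:1}. First I would split the errors as $e_\rho^n=\xi_\rho^n-\eta_\rho^n$ and $e_g^n=\xi_g^n-\eta_g^n$, choosing the projections $\Pi_\rho,\Pi_g$ according to the flux: the Gauss--Radau projections $\pi_h^\mp$ for the alternating flux, so that the leading boundary residuals in $a_h$ and $d_h$ vanish, and the $L^2$ projection $\pi_h$ for the central flux, where they do not. Since the projection parts $\eta_\rho^n,\eta_g^n$ are controlled directly by~\eqref{eq:pi:1} and only enter the final bounds through the triangle inequality, the problem reduces to estimating $\mE_n=||\xi_\rho^n||^2+\eps^2|||\xi_g^{n-1}|||^2$, matching the non-conventional energy already used in Theorem~\ref{eq:thm1}.

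Next I would derive the error equations by inserting the exact solution into the scheme~\eqref{eq:FDG:1T}. Because the exact $(\rho,g)$ solves the micro-macro system~\eqref{eq:mmd:LJ} and the DG forms are consistent, testing the $\rho$-error-equation with $\phi=\xi_\rho^{n+1}$ and the (index-shifted) $g$-error-equation with $\psi=\eps^2\xi_g^n$ reproduces exactly $\Xi_\eps(\xi_\rho^{n+1},\xi_\rho^n,\xi_g^n,\xi_g^{n-1})$ on the left, while the right-hand side collects two kinds of residuals: temporal truncation errors of size $O(\Dt)$ (bounded using $\partial_{tt}\rho,\partial_{tt}g,\partial_{xt}\rho$, which is where the hidden constant $C_*$ enters) and the projection residuals $\eta_\rho,\eta_g$ appearing through $a_h,b_{h,v},d_h$. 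The essential bookkeeping point is that the stiff factors cancel favorably: the $\tfrac{1}{\eps^2}d_h(\rho,\cdot)$ and $\tfrac{1}{\eps^2}(g,\cdot)$ consistency terms are tested against $\eps^2\xi_g$ and thus contribute at $O(1)$, whereas $\tfrac{1}{\eps}b_{h,v}$ contributes at $O(\eps)$, which is precisely the origin of the $\eps h^{2k+1}$ term in the statement.

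I would then apply Lemma~\ref{lem:1} to bound the left side below by $\tfrac{1}{2\Dt}(\mE_{n+1}-\mE_n)$ plus the jump term $(\eps-\alpha_2\tfrac{\Dt}{h})\langle\tfrac{|v|}{2}\sum_i[\xi_g^n]_\iL^2\rangle$ and the $-\alpha_1\tfrac{\Dt}{h^2}|||\xi_g^n|||^2$ remainder. Under the \emph{strict} CFL $\Dt\le\sigma\Dt_{\textrm{stab}}$ with $\sigma<1$, the jump coefficient remains positive with a margin proportional to $1-\sigma$, and this surplus dissipation is exactly what I would spend, via Young's inequality, to absorb the flux-dependent projection residuals carrying the jumps $[\xi_g^n]_\iL$; this accounts for the $\tfrac{1}{1-\sigma}$ prefactor. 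Estimating each residual with~\eqref{eq:pi:1} and the inverse inequalities~\eqref{eq:inv}, and using $\Dt<\tfrac12$ to absorb the implicit $\mE_{n+1}$ contributions on the left, I would obtain a recursion $\mE_{n+1}\le(1+C\Dt)\mE_n+\Dt\,R$, where $R$ gathers the data-dependent quantities. A discrete Gronwall argument over $n\Dt\le T$ turns $(1+C\Dt)^n\le e^{CT}$ into a constant absorbed into $C_*$, and with $\mE_0=O(h^{2k+2})$ from the $L^2$ initialization and a final triangle inequality adding $||\eta_\rho^n||^2+\eps^2|||\eta_g^{n-1}|||^2$, the two stated bounds follow.

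The main obstacle I anticipate is the simultaneous control of the stiff spatial-consistency terms and the off-diagonal cross terms coupling $\xi_\rho$ and $\xi_g$: these must be absorbed using only the limited jump dissipation afforded by the $(1-\sigma)$ margin and the favorable $\eps$-cancellations above, rather than by large Gronwall constants that would spoil uniformity in $\eps$. The gap between the two flux families then reduces entirely to how $d_h(\eta_\rho^{n+1},\cdot)$ and $a_h(\eta_g^n,\cdot)$ are treated: for the alternating flux the Gauss--Radau projections annihilate the leading boundary terms, yielding the optimal $h^{2k+2}$ rate, whereas for the central flux those terms survive and, after the inverse inequalities, degrade the estimate to the suboptimal $h^{2k}$ rate.
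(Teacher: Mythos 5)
Your proposal follows the paper's proof essentially step for step: the same flux-adapted projections ($\pi_h^+,\pi_h^-$ for the alternating flux, $\pi_h$ for the central flux), the same test functions $\phi=\xi_\rho^{n+1}$ and $\psi=\eps^2\xi_g^n$ (after the index shift), the lower bound from Lemma \ref{lem:1}, the $(1-\sigma)$-margin/Young absorption of the jump and $|||\xi_g^n|||$ terms that produces the $\frac{1}{1-\sigma}$ prefactor, the truncation-error bounds, and the $\Dt<\frac12$ Gronwall induction with a final triangle inequality. The one inaccuracy is your induction base: with the index-shifted energy there is no $\mE_0$, so the base case is $\mE_1=||\xi_\rho^1||^2+\eps^2|||\xi_g^0|||^2$, and bounding $||\xi_\rho^1||$ is not pure initialization error --- it requires one application of the $\rho$-error equation at $n=0$, including the $O(\Dt h^{k})$ contribution from $a_h(e_g^0,\phi)$, which is then absorbed using $\Dt\leq C(h^2+\eps h)$, exactly as done in Section \ref{sec:3.2.4} of the paper.
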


The error estimates are obtained as long as the time step is no larger than that required for numerical stability. We further summarize the established spatial accuracy orders in  Table \ref{Table:convg}. Numerically, higher than theoretical convergence rates can be observed in some cases (see \cite{JLQX_numerical}).
%
\begin{table}
\begin{center}
 \caption{Spatial accuracy orders established by the error estimates.}
 \vspace{0.2cm}
\begin{tabular}{|c|c|c|}
\hline
 &alternating &central \\
\hline
$\eps=O(h)$ &$k+1$& $k$\\\hline
$\eps>> h$& $k+\frac{1}{2}$ & $k$\\
 \hline
\end{tabular}
\label{Table:convg}
\end{center}
\end{table}
\begin{rem} What established here are a priori error estimates, and the constant $C_*$ depends on exact solutions hence possibly on $\eps$.  For any test case  where $\sup_\eps C_* <\infty$, the error estimates in Theorem \ref{thm:main} hold uniformly with respect to $\eps$.
\end{rem}

\subsubsection{Local truncation errors and error equations}
\label{sec:3.2.1}

Using the consistency of the DG spatial discretization, local truncation errors from the $n$-th step  temporal discretization, denoted as $\tau_\rho^n(\cdot) \in U_h^k$ and $\tau_g^n(\cdot,v)\in U_h^k$, are defined as follows. For any $\phi(\cdot), \psi(\cdot, v)\in U_h^k$,
\begin{subequations}
\label{eq:trun}
\begin{align}
&\left(\frac{\rho^{n+1}-\rho^n}{\Dt}, \phi\right) + a_h(g^n, \phi)=(\tau_\rho^n, \phi), \label{eq:trun:r}\\
&\left(\frac{g^{n+1}-g^n}{\Dt}, \psi\right)+ \frac{1}{\eps} b_{h,v}(g^n, \psi)-\frac{v}{\eps^2} d_h(\rho^{n+1}, \psi) =
 -\frac{1}{\eps^2}(g^{n+1}, \psi)+(\tau_g^n, \psi).
 \label{eq:trun:g}
\end{align}
\end{subequations}
\begin{lem} The following estimates hold for the local truncation errors,
\begin{align}
&||\tau_\rho^n||\leq \frac{\Dt}{\sqrt{3}}\max_{t\in[0,T]}||\partial_{tt} \rho(\cdot,t)||,\\
&|||\eps^2\tau_g^n|||\leq \eps^2\frac{\Dt}{\sqrt{3}} \max_{t\in[0,T]}|||\partial_{tt} g|||+\Dt \max_{t\in[0,T]}(|||\partial_t g|||+{\langle v^2\rangle}^{1/2}||\partial_{xt}\rho||).
\end{align}
\label{lem:trun}
\end{lem}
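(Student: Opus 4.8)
The plan is to exploit that the DG spatial discretization is \emph{exactly} consistent for smooth functions, so that the two defining relations \eqref{eq:trun} isolate purely the temporal error. Concretely, integrating by parts element-by-element and using the continuity of the exact solution together with the consistency of the numerical fluxes, one checks that the exact solution satisfies the semi-discrete identities $(\partial_t\rho,\phi)+a_h(g,\phi)=0$ and $(\partial_t g,\psi)+\frac1\eps b_{h,v}(g,\psi)-\frac{v}{\eps^2}d_h(\rho,\psi)=-\frac1{\eps^2}(g,\psi)$ for all $\phi,\psi\in U_h^k$ and every $t$; in particular $a_h(g,\phi)=(\partial_x\langle vg\rangle,\phi)$ and $d_h(w,\psi)=-(\partial_x w,\psi)$ whenever $w$ is smooth, the latter being the exact DG integration-by-parts identity with consistent flux (so that no inverse inequality is invoked). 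These identities let me replace the spatial forms acting on the exact solution by genuine derivatives.

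For $\tau_\rho^n$ I would subtract the $\rho$-identity evaluated at $t^n$ from \eqref{eq:trun:r}. Since $a_h(g^n,\phi)=-(\partial_t\rho|_{t^n},\phi)$, this gives $(\tau_\rho^n,\phi)=\big(\tfrac{\rho^{n+1}-\rho^n}{\Dt}-\partial_t\rho|_{t^n},\phi\big)$ for all $\phi\in U_h^k$, so $\tau_\rho^n$ is the $L^2$-projection $\pi_h$ of $w:=\tfrac{\rho^{n+1}-\rho^n}{\Dt}-\partial_t\rho|_{t^n}$ and hence $\|\tau_\rho^n\|\le\|w\|$. Writing $w$ through the integral Taylor remainder $w(x)=\tfrac1{\Dt}\int_{t^n}^{t^{n+1}}(t^{n+1}-s)\,\partial_{tt}\rho(x,s)\,ds$, Cauchy--Schwarz in $s$ (using $\int_{t^n}^{t^{n+1}}(t^{n+1}-s)^2\,ds=\Dt^3/3$) followed by integration in $x$ yields $\|w\|\le\tfrac{\Dt}{\sqrt3}\max_{[0,T]}\|\partial_{tt}\rho\|$, which is the claimed bound.

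For $\tau_g^n$ the crucial choice is to subtract the $g$-identity at $t^n$ (not at $t^{n+1}$): because $b_{h,v}(g^n,\cdot)$ occurs at the very same time level in both \eqref{eq:trun:g} and the identity, it cancels exactly, and the dangerous $\tfrac1\eps$ prefactor disappears. What remains is $(\tau_g^n,\psi)=\big(\tfrac{g^{n+1}-g^n}{\Dt}-\partial_t g|_{t^n},\psi\big)-\tfrac{v}{\eps^2}d_h(\rho^{n+1}-\rho^n,\psi)+\tfrac1{\eps^2}(g^{n+1}-g^n,\psi)$. Multiplying by $\eps^2$ and using $-v\,d_h(\rho^{n+1}-\rho^n,\psi)=(v\,\partial_x(\rho^{n+1}-\rho^n),\psi)$, I get that for each fixed $v$ the function $\eps^2\tau_g^n(\cdot,v)$ is the $L^2$-projection of $\eps^2\big(\tfrac{g^{n+1}-g^n}{\Dt}-\partial_t g|_{t^n}\big)+v\,\partial_x(\rho^{n+1}-\rho^n)+(g^{n+1}-g^n)$. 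A triangle inequality in the $|||\cdot|||$ norm then splits the bound into three pieces: the first is handled exactly as for $\rho$ and gives $\eps^2\tfrac{\Dt}{\sqrt3}\max|||\partial_{tt}g|||$; the second satisfies $|||v\,\partial_x(\rho^{n+1}-\rho^n)|||=\langle v^2\rangle^{1/2}\|\partial_x(\rho^{n+1}-\rho^n)\|\le\langle v^2\rangle^{1/2}\Dt\max\|\partial_{xt}\rho\|$ since $\partial_x(\rho^{n+1}-\rho^n)=\int_{t^n}^{t^{n+1}}\partial_{xt}\rho\,dt$; the third obeys $|||g^{n+1}-g^n|||\le\Dt\max|||\partial_t g|||$. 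Summing reproduces the stated estimate.

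The main obstacle --- and the point that dictates which identity to subtract --- is the $\tfrac1\eps b_{h,v}$ term: a careless comparison (e.g.\ against the identity at $t^{n+1}$) would leave an uncancelled $\tfrac1\eps b_{h,v}(g^{n+1}-g^n,\cdot)$ that blows up as $\eps\to0$ and, worse, would require an inverse inequality to control the discrete spatial derivative, spoiling the $\eps$-uniformity. Subtracting at $t^n$ removes it entirely. The second delicate point is keeping the $d_h(\rho^{n+1}-\rho^n,\cdot)$ term free of any $h^{-1}$ factor; this is exactly why one must use the smooth-function identity $d_h(w,\psi)=-(\partial_x w,\psi)$ rather than bounding $d_h$ crudely, so that this contribution becomes a genuine $O(\Dt)$ quantity multiplying $\|\partial_{xt}\rho\|$.
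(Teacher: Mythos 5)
Your proposal is correct and follows essentially the same route as the paper: both exploit that the exact smooth solution satisfies the semi-discrete DG identities (consistency via elementwise integration by parts and consistent fluxes), so that after comparing with the truncation-error definition at time level $t^n$ the $\frac{1}{\eps}b_{h,v}$ term cancels exactly, $-v\,d_h(\rho^{n+1}-\rho^n,\psi)$ becomes $(v\,\partial_x(\rho^{n+1}-\rho^n),\psi)$, and the remaining three pieces are bounded by Taylor remainders and Cauchy--Schwarz exactly as in the paper. Your observation that $\tau_\rho^n$ is the $L^2$ projection of the Taylor remainder is just a rephrasing of the paper's dual-norm argument $||\tau_\rho^n||=\max_{0\ne\phi\in U_h^k}(\tau_\rho^n,\phi)/||\phi||$, so there is no substantive difference.
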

\begin{proof}
For any $\phi\in U_h^k$, using the definition of the local truncation errors
 and  integration by parts on each mesh element, we have
\begin{align*}
(\tau_\rho^n, \phi)&=\left(\frac{\rho^{n+1}-\rho^n}{\Dt}, \phi\right) + a_h(g^n, \phi)
                    =\left(\frac{\rho^{n+1}-\rho^n}{\Dt}-\partial_t \rho^n, \phi\right) + (\partial_t \rho^n, \phi)+ a_h(g^n, \phi)\\
&=\left(\frac{\rho^{n+1}-\rho^n}{\Dt}-\partial_t \rho^n, \phi\right) + ((\partial_t \rho +\partial_x \langle v g\rangle|_{t=t^n}, \phi)
=\left(\frac{\rho^{n+1}-\rho^n}{\Dt}-\partial_t \rho^n, \phi\right).
\end{align*}
The third equality is due to the consistency of the numerical flux, therefore the consistency of the spatial discretization. Now
\begin{align*}
||\tau_\rho^n||&=\max_{0\ne\phi\in U_h^k}\frac{(\tau_\rho^n, \phi)}{||\phi||}
\leq ||\frac{\rho^{n+1}-\rho^n}{\Dt}-\partial_t \rho^n||
=\frac{1}{\Dt}||\int_{t^n}^{t^{n+1}} (t^{n+1}-t)\partial_{tt} \rho(x,t)dt||\\
&\leq \frac{1}{\Dt}\left( \int_{t^n}^{t^{n+1}} (t^{n+1}-t)^2dt \int_{\Omega_x}\int_{t^n}^{t^{n+1}} (\partial_{tt} \rho(x,t))^2 dt dx   \right)^{1/2}
\leq \frac{\Dt}{\sqrt{3}}\max_{t\in[0,T]}||\partial_{tt} \rho(\cdot,t)||.
\end{align*}
Similarly, for the truncation error in equation for $g$, $\forall\psi\in U_h^k$,
\begin{align*}
(\eps^2\tau_g^n, \psi)
&=\eps^2\left(\frac{g^{n+1}-g^n}{\Dt}, \psi\right)+ \eps b_{h,v}(g^n, \psi)-v d_h(\rho^{n+1}, \psi) +(g^{n+1}, \psi)\\
&=\eps^2\left(\partial_t g^n, \psi\right)+ \eps b_{h,v}(g^n, \psi)-v d_h(\rho^n, \psi) +(g^n, \psi)\\
&\;\;\;\;\;+\eps^2\left(\frac{g^{n+1}-g^n}{\Dt}-\partial_t g^n, \psi\right)-v d_h(\rho^{n+1}-\rho^n, \psi) +(g^{n+1}-g^n, \psi)\\
&=\eps^2\left((\partial_t g +\frac{1}{\eps^2}v\partial_x\rho +\frac{1}{\eps}\{\mathbf{I}-\Pi\}(v\partial_xg)+ \frac{1}{\varepsilon^2} g)|_{t=t^n}, \psi\right)\\
&\;\;\;\;\;+\eps^2\left(\frac{g^{n+1}-g^n}{\Dt}-\partial_t g^n, \psi\right) + (v \partial_x (\rho^{n+1}-\rho^n), \psi) +(g^{n+1}-g^n, \psi)\\
&=\eps^2\left(\frac{g^{n+1}-g^n}{\Dt}-\partial_t g^n, \psi\right) + (v\partial_x(\rho^{n+1}-\rho^n), \psi) +(g^{n+1}-g^n, \psi).
\end{align*}
Note $||\eps^2 \tau_g^n ||=\max_{0\ne\psi\in U_h^k}\frac{(\eps^2\tau_g^n, \psi)}{||\psi||}$, then
\begin{align*}
|||\eps^2 \tau_g^n|||&\leq |||\eps^2 (\frac{g^{n+1}-g^n}{\Dt}-\partial_t g^n) +v\partial_x(\rho^{n+1}-\rho^n)+(g^{n+1}-g^n)|||\\
&\leq \eps^2||| \frac{g^{n+1}-g^n}{\Dt}-\partial_t g^n||| +|||v\partial_x(\rho^{n+1}-\rho^n)|||+|||(g^{n+1}-g^n)|||\\
&\leq \eps^2\frac{\Dt}{\sqrt{3}} \max_{t\in[0,T]}|||\partial_{tt} g|||+\Dt \max_{t\in[0,T]}(|||\partial_t g|||+\langle v^2\rangle ^{1/2}||\partial_{xt}\rho||).
\end{align*}
\end{proof}

Subtracting the numerical scheme \eqref{eq:FDG:1T} from \eqref{eq:trun} and using the linearity, one gets the following error equations,
\begin{subequations}
\label{eq:err}
\begin{align}
\left(\frac{\xi_\rho^{n+1}-\xi_\rho^n}{\Dt}, \phi\right) & + a_h(\xi_g^n, \phi)=\left(\frac{\eta_\rho^{n+1}-\eta_\rho^n}{\Dt}, \phi\right) + a_h(\eta_g^n, \phi)+(\tau_\rho^n, \phi), \label{eq:err:r}\\
\left(\frac{\xi_g^{n+1}-\xi_g^n}{\Dt}, \psi\right) &+ \frac{1}{\eps} b_{h,v}(\xi_g^n, \psi)-\frac{v}{\eps^2} d_h(\xi_\rho^{n+1}, \psi) +\frac{1}{\eps^2}(\xi_g^{n+1}, \psi)\label{eq:err:g}\\
& =\left(\frac{\eta_g^{n+1}-\eta_g^n}{\Dt}, \psi\right) + \frac{1}{\eps} b_{h,v}(\eta_g^n, \psi)-\frac{v}{\eps^2} d_h(\eta_\rho^{n+1}, \psi) +\frac{1}{\eps^2}(\eta_g^{n+1}, \psi)+(\tau_g^n, \psi)\notag
\end{align}
\end{subequations}
for any test functions $\phi(\cdot), \psi(\cdot, v)\in U_h^k$. We now take $\phi=\xi_\rho^{n+1}$ in \eqref{eq:err:r}. In addition, we take $\psi=\eps^2\xi_g^{n+1}$ in \eqref{eq:err:g}, integrate in $v$, and shift the index $n$ to $n-1$.  The resulting two equations are summed up and give $\textrm{LHS}=\textrm{RHS}$,
where
\begin{align}
\textrm{LHS}=&\Xi_\eps(\xi_\rho^{n+1}, \xi_\rho^n, \xi_g^n, \xi_g^{n-1})+|||\xi_g^n|||^2, \label{eq:err:lhs}\\
\textrm{RHS}=&\left(\frac{\eta_\rho^{n+1}-\eta_\rho^n}{\Dt}, \xi_\rho^{n+1}\right) + a_h(\eta_g^n, \xi_\rho^{n+1})+(\tau_\rho^n, \xi_\rho^{n+1})\label{eq:err:rhs}\\
             &+\eps^2\langle\left(\frac{\eta_g^n-\eta_g^{n-1}}{\Dt}, \xi_g^n\right)\rangle + \eps\langle b_{h,v}(\eta_g^{n-1}, \xi_g^n)\rangle-\langle v d_h(\eta_\rho^n, \xi_g^n)\rangle +\langle(\eta_g^n, \xi_g^n)\rangle+\eps^2\langle(\tau_g^{n-1}, \xi_g^n)\rangle.\notag
\end{align}
 Now we apply Lemma \ref{lem:1} with $\phi=\xi_\rho$, $\psi=\xi_g$, and $m=n-1$, and get
\begin{align}
\textrm{LHS}&\geq\frac{1}{2\Dt}(\mE_{n+1}-\mE_n)\label{LHS:bound}\\
 &+\left\{\begin{array}{ll}
 \left(\eps-\alpha_2\frac{\Dt}{h}\right)\langle\frac{|v|}{2}\sum_i [\xi_g^n]_\iL^2\rangle
    +\left(1-\alpha_1 \frac{\Dt}{h^2}\right) |||\xi_g^n|||^2,& \textrm{for}\;k\geq 1,\\
 \left(\eps-\frac{\alpha_2}{2}\frac{\Dt}{h}\right)\langle\frac{|v|}{2}\sum_i [\xi_g^n]_\iL^2\rangle+|||\xi_g^n|||^2, &\textrm{for}\;k=0.
 \end{array}
 \right.
\notag
\end{align}
Next we want to estimate \textrm{RHS}. This will be proceeded for the proposed scheme with the alternating and the central flux \eqref{eq:flux}, respectively.

\subsubsection{To estimate \textrm{RHS} \eqref{eq:err:rhs} with the alternating flux}
\label{sec:3.2.2}

Without loss of generality, we consider using
the alternating flux $\widehat{\langle vg\rangle}={\langle vg\rangle}^-$, $\hat{\rho}= {\rho}^+$
in the proposed scheme \eqref{eq:FDG:1T}. For this case, we take $\Pi_\rho=\pi_h^+$ and $\Pi_g=\pi_h^-$ in error estimates. With such choices and $\df_x\xi_\rho^{n+1}, \df_x\langle v \xi_g^n \rangle(\cdot,v)\in U_h^{k-1}$, one has  $\int_{I_i}  \langle v\eta_g^n \rangle \df_x\xi_\rho^{n+1} dx=\int_{I_i} \eta_\rho^n \df_x \langle v \xi_g^n \rangle dx=0$,
and $\langle v\eta_g^n\rangle^-_{\iL}=(\eta_\rho^n)^+_\iL=0, \forall i$, therefore
\beq
a_h(\eta_g^n, \xi_\rho^{n+1})=\langle v d_h(\eta_\rho^n, \xi_g^n)\rangle =0.
\label{RHS:bound-1}
\eeq
We now turn to the term containing $ b_{h,v}$ in \eqref{eq:err:rhs}.
With $\langle \xi_g^n\rangle=0, \forall n$ in \eqref{eq:a:xign}, and the choice of $\Pi_g$, we have
\begin{align*}
&\langle b_{h,v}(\eta_g^{n-1}, \xi_g^n)\rangle=\langle  (\mD_{h,v}(\eta_g^{n-1}; v), \xi_g^n) \rangle\\
&= - \langle \sum_i\left(\int_{I_i} v \eta_g^{n-1} \df_x \xi_g^n dx\right) + \sum_i \widetilde{(v\eta_g^{n-1})}_\iL[\xi_g^n]_\iL \rangle= - \langle  \sum_i \widetilde{(v\eta_g^{n-1})}_\iL[\xi_g^n]_\iL \rangle\\
&= - \langle \sum_i (v\eta_g^{n-1})^-_\iL[\xi_g^n]_\iL \rangle_{+}- \langle \sum_i (v\eta_g^{n-1})^+_\iL[\xi_g^n]_\iL \rangle_{-}= -  \langle \sum_i (v\eta_g^{n-1})^+_\iL[\xi_g^n]_\iL \rangle_{-}.
\end{align*}
This can be further estimated as below, with any $\sigma\in(0,1)$
\begin{align}
|\langle b_{h,v}(\eta_g^{n-1}, \xi_g^n)\rangle|
&\leq (1-\sigma) \langle \frac{|v|}{2}\sum_i  [\xi_g^n]_\iL^2 \rangle_{-}
+\frac{1}{4(1-\sigma)} \langle 2|v| \sum_i ((\eta_g^{n-1})^+_\iL)^2  \rangle_{-}\notag\\
&\leq (1-\sigma)  \langle \frac{|v|}{2}\sum_i [\xi_g^n]_\iL^2 \rangle
+\frac{C_*}{(1-\sigma)} h^{2k+1}.
\label{RHS:bound-2}
\end{align}
The estimate in \eqref{eq:pi:1} is
used to get the last inequality. We further note that
\begin{align}
||\frac{\eta_\rho^{n+1}-\eta_\rho^n}{\Dt}||^2
=&\int_{\Omega_x}|\frac{1}{\Dt}\int_{t^n}^{t^{n+1}} (I-\Pi_\rho)\partial_t \rho(x,s)ds|^2 dx\notag\\
\leq & \frac{1}{\Dt}\int_{t^n}^{t^{n+1}} ||(I-\Pi_\rho)\partial_t\rho(\cdot,s)||^2ds\notag\\
\leq& C\max_{t\in[0,T]}||\partial_t\rho(\cdot,t)||^2_{H^{k+1}(\Omega_x)}\;h^{2k+2}=C_* h^{2k+2},
\label{RHS:bound-2.5}
\end{align}
and similarly
\begin{align}
|||\frac{\eta_g^n-\eta_g^{n-1}}{\Dt}|||^2
&\leq C \max_{t\in[0,T]}\langle ||\partial_t g(\cdot,t)||^2_{H^{k+1}(\Omega_x)}\rangle \;h^{2k+2}=C_* h^{2k+2}.
\end{align}
With these and the estimates on the truncation errors, we have
\begin{align}
&|\left(\frac{\eta_\rho^{n+1}-\eta_\rho^n}{\Dt}, \xi_\rho^{n+1}\right) +(\tau_\rho^n, \xi_\rho^{n+1})|
\notag\\
&\leq ||\xi_\rho^{n+1}||^2
+\frac{1}{4}||\frac{\eta_\rho^{n+1}-\eta_\rho^n}{\Dt}+\tau_\rho^n||^2
\leq ||\xi_\rho^{n+1}||^2
+C_* (h^{2k+2}+\Dt^2),
\label{RHS:bound-3}
\end{align}
and
\begin{align}
|\eps^2 &\langle\left(\frac{\eta_g^n-\eta_g^{n-1}}{\Dt}, \xi_g^n\right)\rangle +
\langle(\eta_g^n, \xi_g^n)\rangle+\eps^2\langle(\tau_g^{n-1}, \xi_g^n)\rangle|\notag\\
\leq &\eps^2|||\xi_g^n|||^2+\frac{1}{4}\eps^2 |||\frac{\eta_g^n-\eta_g^{n-1}}{\Dt}|||^2+(1-\sigma)  |||\xi_g^n|||^2+\frac{1}{4(1-\sigma) }
(|||\eps^2\tau_g^{n-1}|||^2+|||\eta_g^n|||^2)\notag\\
\leq &\eps^2|||\xi_g^n|||^2+(1-\sigma)  |||\xi_g^n|||^2
+\eps^2 C_* h^{2k+2}+\frac{C_*}{(1-\sigma) }\left( (1+\eps^4)\Dt^2+h^{2k+2}\right).
\label{RHS:bound-4}
\end{align}
Now we combine the estimates in \eqref{RHS:bound-1}, \eqref{RHS:bound-2}, \eqref{RHS:bound-3}, \eqref{RHS:bound-4},
and get
\beq
|\textrm{RHS}|\leq \mE_{n+1}
+(1-\sigma) |||\xi_g^n|||^2+(1-\sigma) \eps \langle \frac{|v|}{2}\sum_i  [\xi_g^n]_\iL^2 \rangle+C_*\tau,
\label{RHS:bound}
\eeq
where
\beq
\tau= (1+\eps^2)h^{2k+2}+\Dt^2 +\frac{1}{1-\sigma}((1+\eps^4)\Dt^2+h^{2k+2}+\eps h^{2k+1}).
\label{eq:tau:1}
\eeq

\subsubsection{To estimate \textrm{RHS} \eqref{eq:err:rhs}  with the central flux}
\label{sec:3.2.3}

Now we consider the proposed scheme \eqref{eq:FDG:1T} with the central flux $\widehat{\langle vg\rangle}=\{{\langle vg\rangle}\}, \hat{\rho}=\{\rho\}$. For this case, we take $\Pi_\rho=\pi_h$ and $\Pi_g=\pi_h$. With such choices,
there is
\begin{align*}
\textrm{RHS}=&a_h(\eta_g^n, \xi_\rho^{n+1})+(\tau_\rho^n, \xi_\rho^{n+1})
             + \eps\langle b_{h,v}(\eta_g^{n-1}, \xi_g^n)\rangle-\langle v d_h(\eta_\rho^n, \xi_g^n)\rangle +\eps^2\langle(\tau_g^{n-1}, \xi_g^n)\rangle\\
             =& (\tau_\rho^n, \xi_\rho^{n+1})+\eps^2\langle(\tau_g^{n-1}, \xi_g^n)\rangle
             -\sum_i\left(\{\langle v\eta_g^n\rangle\}[\xi_\rho^{n+1}]
             +\eps\langle \widetilde{(v\eta_g^{n-1})}[\xi_g^n]\rangle
             +\langle v \{\eta_\rho^n\}[\xi_g^n]\rangle\right)_\iL.
\end{align*}
Moreover, one can use the estimates in \eqref{eq:pi:1},
the estimates for the truncation errors, the inverse inequality \eqref{eq:inv:1}, and have
\begin{align*}
&|(\tau_\rho^n, \xi_\rho^{n+1})+\eps^2\langle(\tau_g^{n-1}, \xi_g^n)\rangle|
\leq \frac{1}{2}||\xi_\rho^{n+1}||^2+\frac{1-\sigma}{2}|||\xi_g^n|||^2+C_*\Dt^2+\frac{C_*}{1-\sigma}(1+\eps^4)\Dt^2,\\
&|\sum_i(\{\langle v\eta_g^n\rangle\}[\xi_\rho^{n+1}])_\iL|
 \leq \frac{h}{4C_{\textrm{inv}}} \sum_i[\xi_\rho^{n+1}]^2_\iL
 +\frac{C_{\textrm{inv}}}{ h}\sum_i \{\langle v\eta_g^n\rangle\}^2_\iL
 \leq \frac{1}{2}||\xi_\rho^{n+1}||^2 +C_* h^{2k},\\
&|\eps\sum_i\langle \widetilde{(v\eta_g^{n-1})}[\xi_g^n]\rangle_\iL|
\leq (1-\sigma) \eps \langle \frac{|v|}{2}\sum_i  [\xi_g^n]_\iL^2 \rangle+\frac{C_*}{1-\sigma}\eps h^{2k+1},\\
&\sum_i \langle v \{\eta_\rho^n\}[\xi_g^n]\rangle_\iL
\leq \frac{1-\sigma}{2}|||\xi_g^n|||^2 + \frac{C_*}{1-\sigma}h^{2k},
\end{align*}
therefore RHS can be estimated as \eqref{RHS:bound}
where
\beq
\tau= h^{2k}+\Dt^2 +\frac{1}{1-\sigma} ((1+\eps^4)\Dt^2+h^{2k}+\eps h^{2k+1}).
\label{eq:tau:2}
\eeq

\subsubsection{To estimate ${\mathcal E}_1$}
\label{sec:3.2.4}

With the discrete energy defined in the analysis, we also need to estimate ${\mathcal E}_1$. To achieve this, we start with \eqref{eq:err:r}, $n=0$, and have
\begin{align*}
||\xi_\rho^1||=\sup_{0\ne\phi\in U_h^k}\frac{(\xi_\rho^1, \phi)}{||\phi||}
&=\sup_{0\ne\phi\in U_h^k}\frac{\left(\eta_\rho^1+e_\rho^0, \phi\right) - \Dt a_h(e_g^0, \phi)+
\Dt(\tau_\rho^0,\phi)}{||\phi||}\\
&\leq ||\eta_g^1||+||e_\rho^0||+\Dt\left (||\tau_\rho^0||+ \sup_{0\ne\phi\in U_h^k}\frac{a_h(e_g^0, \phi)}{||\phi||}\right)\;.
\end{align*}
The first two terms can be estimated based on the property of the projections in \eqref{eq:pi:1}, that is,
$||\eta_\rho^1||=||\rho^1-\Pi_\rho\rho^1||=C_* h^{k+1}$ and $||e_\rho^0||=||\rho^0-\pi_h\rho^0||=C_* h^{k+1}$. From Lemma \ref{lem:trun}, the truncation error $\tau_\rho^0$ can be controlled by $||\tau_\rho^0||\leq C_*\Dt$. To estimate the last term, we use $e_g^0=g^0-\pi_h g^0$  and \eqref{eq:pi:1}, and have
\begin{align*}
|a_h(e_g^0, \phi)|=  |\sum_i \widehat{\langle v e_g^0\rangle}_{\iL} [\testR]_\iL|
\leq \left(h^{-1} \sum_i \widehat{\langle v e_g^0\rangle}_{\iL}^2 \right)^{1/2} \left(h \sum_i [\testR]_\iL^2\right)^{1/2}
\leq C_* h^k ||\phi||,
\end{align*}
for any $\phi\in U_h^k$. Note that $\Dt\leq\sigma\Dt_{stab}$ implies $\Dt=C(h^2+\eps h)$, with this, we finally have
\begin{align*}
{\mathcal E}_1&=||\xi_\rho^1||^2+\eps^2|||\xi_g^0|||^2=||\xi_\rho^1||^2+\eps^2||\Pi_g g^0-g^0+g^0-\pi_h g^0||^2\\
&\leq C_*\left(\Dt^2 h^{2k}+(1+\eps^2)h^{2k+2}+\Dt^4\right)\leq C_*\left((1+\eps^2)h^{2k+2}+\Dt^4\right).
\end{align*}

\subsubsection{The final step for the error estimates}
\label{sec:3.2.5}

We now combine the bounds of \textrm{LHS} and \textrm{RHS} in \eqref{LHS:bound}, \eqref{RHS:bound} with \eqref{eq:tau:1} for the alternating flux, or  with \eqref{eq:tau:2} for the central flux. Together with \eqref{eq:thm1:4}, we have
\begin{align}
\frac{1}{2\Dt}(\mE_{n+1}-\mE_n) \leq \mE_{n+1} +\hat{\gamma} |||\xi_g^n|||^2+C_*\tau
\end{align}
with
\begin{align}
\hat{\gamma}=
\left\{\begin{array}{ll}
\max\left(-\sigma\eps+ \alpha_2 \frac{\Dt}{h}, 0\right)\frac{\alpha_3}{h}+\left(-\sigma+ \alpha_1 \frac{\Dt}{h^2}\right)&\textrm{for}\; k\geq 1\\
\max\left(-\sigma\eps+ \frac{\alpha_2}{2} \frac{\Dt}{h}, 0\right)\frac{\alpha_3}{h}-\sigma &\textrm{for}\; k=0\;.
\end{array}
\right.
\end{align}

As long as $\hat{\gamma}\leq 0$, that is,
\begin{equation}
\Dt \leq \sigma\Dt_{stab},
\end{equation}
where $\Dt_{stab}$ comes from the time step constraint in \eqref{eq:cfl} for numerical stability,
then
\begin{equation}
(1-2\Dt) \mE_{n+1}\leq \mE_n +(2\Dt) C_*\tau.
\end{equation}
Assume $\Dt < \frac{1}{2}$.
Define $\Theta_n=\mE_n(1-2\Dt)^n$, then
$
\Theta_{n+1}\leq \Theta_n + 2\Dt (1-2\Dt)^n C_* \tau.
$
 With mathematical induction, and the estimate of ${\mathcal E}_1$ in Section \ref{sec:3.2.4}, one gets
\begin{equation}
\Theta_n\leq 2\Dt  C_* \tau ((1-2\Dt)+\cdots +(1-2\Dt)^{n-1})+\Theta_1\leq  (C_* \tau+{\mathcal E }_1) (1-2\Dt)\leq C_* \tau,
\end{equation}
that is
$\mE_n(1-2\Dt)^n\leq C_* \tau.$
Moreover, with $n: \Dt n\leq T$, there is
\begin{equation}
\mE_n\leq (1-2\Dt)^{-n} C_* \tau\leq e^{2\Dt n} C_* \tau\leq C_*e^{2T} \tau=C_*\tau.
\end{equation}

On the other hand, the properties of the projection operators \eqref{eq:pi:1} ensure $||\eta_\rho^n||=C_* h^{k+1}$ and $|||\eta_g^{n-1}|||=C_* h^{k+1}$, therefore
\begin{align}
||e_\rho^n||^2+\eps^2|||e_g^{n-1}|||^2
\leq 2(||\eta_\rho^n||^2+\eps^2|||\eta_g^{n-1}|||^2+\mE_n) \leq C_*(\tau+(1+\eps^2)h^{2k+2}).
\end{align}
We now can conclude the main error estimates in Theorem \ref{thm:main} by further utilizing the forms of $\tau$ in \eqref{eq:tau:1} and \eqref{eq:tau:2}.

\subsection{Rigorous asymptotic analysis}
\label{sec:3.3}

In \cite{JLQX_numerical}, a formal asymptotic analysis was performed for the proposed methods, showing that when $\eps\rightarrow 0$, the limiting schemes are
consistent discretizations for the limiting heat equation. In this section, we want to establish this asymptotic preserving property rigorously for DG-IMEX1 using tools from functional analysis.

To explicitly indicate how the numerical solutions depend on the mesh parameter $\Dt$, $h$, and on $\eps$, we use $\rho_{\eps, \Dt, h}^n$ and $g_{\eps, \Dt, h}^n$ to denote $\rho_h^n$ and $g_h^n$ in this section, unless otherwise specified.
For the initial condition $\rho_\eps (x, t=0)$ and $g_\eps(x, v, t=0)$, two assumptions are made which are  mild and reasonable.
%

\begin{itemize}
\item[(${\mathcal A}1$)] At $t=0$,
\begin{equation}
\rho_\eps \rightharpoonup \rho_0, \quad \langle vg_\eps \rangle \rightharpoonup \langle vg_0 \rangle\;\;\textrm{in} \; L^2(\Omega_x),\quad \textrm{as}\; \eps\rightarrow 0.
\end{equation}
 Here ``$\rightharpoonup$'' stands for weak convergence.
\item[(${\mathcal A}2$)]
\begin{equation}
\sup_\eps (||\rho_\eps||)|_{t=0}<\infty,\qquad \sup_\eps(|||g_\eps|||)|_{t=0}<\infty.
\end{equation}
\end{itemize}

In the following analysis, the index $k$ for the discrete space $U_h^k$, hence in the numerical method, is fixed.  Let $\{\Psi_j\}_{j=1}^{N_k}$ denote an orthonormal basis of $U_h^k$ with respect to the standard $L^2$ inner product. We define
\begin{equation}
\label{eq:init:limit}
\rho_{\Dt, h}^0=\pi_h\rho_0|_{t=0},\qquad q_{\Dt, h}^0=\pi_h \langle v g_0\rangle|_{t=0},
\end{equation}
 and also denote $q^n_{\eps, \Dt, h}=\langle v g^n_{\eps, \Dt, h}\rangle$, $\forall n$. Below we will discuss some properties of the numerical solution at $t=t^n$ with $n=0, 1$, before turning to the main result in Theorem  \ref{thm:asymptotic}.

\begin{lem}
\label{lem:init}
Under the assumptions (${\mathcal A}1$) and (${\mathcal A}2$), we have
\begin{itemize}
\item[(i)] $\lim_{\eps\rightarrow 0}\rho^0_{\eps, \Dt, h}= \rho^0_{\Dt, h}$, and $\lim_{\eps\rightarrow 0}q^0_{\eps, \Dt, h}= q^0_{\Dt, h}$. The convergence is in any norm.
\item[(ii)] $\sup_\eps||\rho_{\eps, \Dt, h}^{1}||<\infty$, and  $\sup_\eps|||g_{\eps, \Dt, h}^{0}|||<\infty$.
\end{itemize}
\end{lem}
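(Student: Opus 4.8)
The plan is to treat the two parts separately, exploiting that everything lives in (or is obtained by projecting onto) the \emph{finite-dimensional} space $U_h^k$, with $\Dt$ and $h$ held fixed while $\eps\to0$.

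For part (i), I would first record that $\rho^0_{\eps,\Dt,h}=\pi_h\rho_\eps|_{t=0}$ and that $q^0_{\eps,\Dt,h}=\langle v\,\pi_h g_\eps|_{t=0}\rangle=\pi_h\langle v g_\eps\rangle|_{t=0}$, the last identity because $\pi_h$ acts only in $x$ and therefore commutes with the velocity average $\langle\cdot\rangle$ (for each fixed $v$, $\int_{I_i}(\pi_h g_\eps-g_\eps)p\,dx=0$ for all $p\in P^k(I_i)$, and one then integrates in $v$). Expanding in the orthonormal basis, $\pi_h w=\sum_{j=1}^{N_k}(w,\Psi_j)\Psi_j$, I would invoke $({\mathcal A}1)$: since each $\Psi_j$ is a fixed $L^2(\Omega_x)$ test function, weak convergence gives $(\rho_\eps,\Psi_j)\to(\rho_0,\Psi_j)$ and $(\langle v g_\eps\rangle,\Psi_j)\to(\langle v g_0\rangle,\Psi_j)$ for every $j$. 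Hence the finitely many coefficients of $\pi_h\rho_\eps$ and $\pi_h\langle v g_\eps\rangle$ converge to those of $\rho^0_{\Dt,h}$ and $q^0_{\Dt,h}$ in \eqref{eq:init:limit}; because $U_h^k$ is finite-dimensional all norms are equivalent, so the convergence holds in any norm. This weak-to-any-norm upgrade via finite-dimensionality is the conceptual crux of (i).

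For part (ii), the bound on $g^0$ is immediate: $\pi_h$ is the $L^2$-orthogonal projection, hence a contraction in each $v$-slice, so $|||g^0_{\eps,\Dt,h}|||\le|||g_\eps|||_{t=0}$, which is $\sup_\eps$-finite by $({\mathcal A}2)$. The bound on $\rho^1$ cannot be read off the stability estimate of Theorem \ref{eq:thm1}, whose recursion \eqref{eq:thm1:0} only controls $\rho^{n+1}$ for $n\ge1$ (that is, from $\rho^2$ onward); $\rho^1$ must be estimated directly from the first update \eqref{eq:FDG:1T:r} with $n=0$. Testing with $\phi=\rho_h^1$ gives $\|\rho_h^1\|^2=(\rho_h^0,\rho_h^1)-\Dt\,a_h(g_h^0,\rho_h^1)$. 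I would then bound $|a_h(g_h^0,\rho_h^1)|$ by Cauchy--Schwarz together with the inverse inequalities \eqref{eq:inv:1}--\eqref{eq:inv:2}: the volume term by $\|\langle v g_h^0\rangle\|\,\|\partial_x\rho_h^1\|\le\langle v^2\rangle^{1/2}|||g_h^0|||\,\tfrac{\hat{C}_{\textrm{inv}}^{1/2}}{h}\|\rho_h^1\|$, and the flux/jump term by $\tfrac{C}{h}|||g_h^0|||\,\|\rho_h^1\|$ using the trace inverse inequality on both the flux trace values of $\langle v g_h^0\rangle$ and the jumps of $\rho_h^1$. Dividing through by $\|\rho_h^1\|$ yields $\|\rho_h^1\|\le\|\rho_h^0\|+C\tfrac{\Dt}{h}|||g_h^0|||$.

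Finally I would conclude (ii) by noting $\|\rho_h^0\|=\|\pi_h\rho_\eps\|\le\|\rho_\eps\|_{t=0}$ and $|||g_h^0|||\le|||g_\eps|||_{t=0}$, both $\sup_\eps$-finite by $({\mathcal A}2)$, while $C\Dt/h$ is a fixed constant since $\Dt,h$ are frozen in this limit; hence $\sup_\eps\|\rho^1_{\eps,\Dt,h}\|<\infty$. The only genuinely technical step is the first-step control of the non-coercive transport form $a_h$ in the $\rho^1$ bound, where the inverse inequalities necessarily generate a factor $\Dt/h$; the point to check is simply that this factor is harmless because the mesh is fixed as $\eps\to0$, so no CFL relation between $\Dt$ and $h$ is needed here.
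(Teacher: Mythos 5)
Your proposal is correct and follows essentially the same route as the paper: part (i) by expanding the $L^2$ projection in a basis of the finite-dimensional space $U_h^k$ and passing weak convergence from $({\mathcal A}1)$ through the finitely many coefficients, and part (ii) by bounding the first update via $\|\rho_h^1\|\le\|\rho_h^0\|+\Dt\sup_\phi a_h(g_h^0,\phi)/\|\phi\|$ with the inverse inequalities \eqref{eq:inv}. Your only (harmless) deviations are cosmetic: you invoke the contraction property of $\pi_h$ where the paper bounds basis coefficients directly, and you test with $\phi=\rho_h^1$ rather than writing the dual-norm supremum.
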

\begin{proof}

First note that $U^k_h$ is finite dimensional, then a sequence in $U^k_h$, if converges, will converge in any norm.
To prove (i), based on assumption (${\mathcal A}1$), we have $\lim_{\eps\rightarrow\infty}(\rho_\eps, \psi)=(\rho_0, \psi)$, $\forall\psi\in L^2(\Omega_x)$ at $t=0$, therefore  as $\eps\rightarrow 0$,
\begin{align*}
\rho^0_{\eps, \Dt, h}=\pi_h{\rho_\eps}|_{t=0}=\sum_{j=1}^{N_k}(\rho_\eps|_{t=0}, \Psi_j) \Psi_j
\rightarrow \sum_{j=1}^{N_k}(\rho_0|_{t=0}, \Psi_j) \Psi_j=\pi_h\rho_0|_{t=0}=\rho^0_{\Dt, h}.
\end{align*}
Similarly, the second half of (i) can be proved.

To prove (ii), recall that $g^0_{\eps, \Dt, h}=\pi_h g_\eps|_{t=0}$, then
\begin{align*}
|||g^0_{\eps, \Dt, h}|||^2=\langle ||g^0_{\eps, \Dt, h}||^2\rangle=\langle \sum_{j=1}^{N_k}(g_\eps|_{t=0}, \Psi_j)^2\rangle
\leq (|||g_\eps|||^2)|_{t=0}\sum_{j=1}^{N_k}||\Psi_j||^2.
\end{align*}
Now with assumption (${\mathcal A}2$), we have $\sup_\eps|||g^0_{\eps, \Dt, h}|||<\infty$. Similarly, one can show
that
\begin{equation}
\label{lem:init:1}
\sup_\eps||\rho^0_{\eps, \Dt, h}||\leq (||\rho_\eps||)|_{t=0} \left(\sum_{j=1}^{N_k}||\Psi_j||^2\right)^{1/2}<\infty.
\end{equation}
What remains is to establish the boundedness of $\sup_\eps||\rho_{\eps, \Dt, h}^{1}||$.

From \eqref{eq:FDG:1T:r} with $n=0$,
\begin{equation}
\label{lem:init:2}
||\rho^1_{\eps,\Dt, h}||\leq ||\rho^0_{\eps,\Dt, h}||+\Dt\sup_{0\ne\phi\in U_h^k}\frac{a_h(g^0_{\eps,\Dt, h}, \phi)}{||\phi||}.
\end{equation}
Without loss of generality, in $a_h(\cdot,\cdot)$ defined in \eqref{eq:ah}, we consider $\widehat{\langle vg\rangle}=\langle vg\rangle^-$. First we have
\begin{align}
\label{lem:init:3}
||\langle v g_{\eps,\Dt, h}^0\rangle||^2&=||\sum_{j=1}^{N_k} (\langle v g_\eps\rangle|_{t=0},\Psi_j)\Psi_j||^2=\sum_{j=1}^{N_k}(\langle v g_\eps\rangle|_{t=0},\Psi_j)^2\notag\\
&\leq (||\langle v g_\eps\rangle||^2)|_{t=0}\sum_{j=1}^{N_k}||\Psi_j||^2
\leq \langle v^2\rangle (||| g_\eps|||^2)|_{t=0} \sum_{j=1}^{N_k}||\Psi_j||^2,
\end{align}
and
\begin{align}
\label{lem:init:4}
|\langle v g_{\eps,\Dt, h}^0\rangle^-_{\iL}|&=|\sum_{j=1}^{N_k} (\langle v g_\eps\rangle|_{t=0},\Psi_j)\Psi_j(x^-_\iL)|\notag\\
&\leq \langle v^2\rangle^{1/2} (||| g_\eps|||)|_{t=0} \sum_{j=1}^{N_k}||\Psi_j|| |\Psi(x^-_\iL)|.
\end{align}
Based on the definition of $a_h(\cdot, \cdot)$, and inverse inequalities in \eqref{eq:inv}, we further get
\begin{align}
\label{lem:init:5}
a_h(g^0_{\eps,\Dt, h}, \phi)&=
-( \langle v g^0_{\eps,\Dt, h}\rangle, \df_x\testR) - \sum_i \widehat{\langle vg^0_{\eps,\Dt, h}\rangle}_{\iL} [\testR]_\iL\notag\\
&\leq ||\langle v g^0_{\eps,\Dt, h}\rangle||\;||\df_x\testR||+\sum_i |\widehat{\langle vg^0_{\eps,\Dt, h}\rangle}_{\iL}| \; |[\testR]_\iL|\notag\\
&\leq \frac{C(k)}{h} ||\phi||  \left( ||\langle v g^0_{\eps,\Dt, h}\rangle||+ \left(h\sum_i (\langle v g_{\eps,\Dt, h}^0\rangle^-_{\iL}  )^2\right)^{1/2}\right).
\end{align}
Now we can combine \eqref{lem:init:1}-\eqref{lem:init:5} as well as the boundedness assumption (${\mathcal A}2$), and conclude
\begin{equation*}
\sup_\eps||\rho^1_{\eps,\Dt, h}||\leq C(k, \Dt, h, \langle v^2\rangle) \left(\sup_\eps||\rho_\eps||+\sup_\eps|||g_\eps|||\right)|_{t=0} <\infty.
\end{equation*}
\end{proof}
%
\begin{thm}
\label{thm:asymptotic}
Let $c_0$ be any fixed positive constant, $c_0\in(0,1)$. Under the assumptions (${\mathcal A}1$) and (${\mathcal A}2$) on the initial data, and under the following condition on the time step $\Dt$,
 \beq
 \label{eq:Dt0}
     \Dt <\Dt_{stab,c_0}=\frac{h}{\alpha_1 +\alpha_2\alpha_3} ((1-c_0)h+\min(\eps, \frac{(1-c_0)\alpha_2h}{\alpha_1})\alpha_3),
 \eeq
with $\alpha_i, i=1, 2, 3$ defined in \eqref{eq:alpha}, we have
\beq
\label{eq:aa:5}
 \lim_{\eps\rightarrow 0}\rho_{\eps, \Dt, h}^n =\rho_{\Dt, h}^n, \quad q_{\eps, \Dt, h}^n\rightharpoonup q^n_{\Dt, h}\;\textrm{in}\; L^2(\Omega_x)\;\;\textrm{as} \;\; \eps\rightarrow 0,\;\; \forall n\geq 1
\eeq
for some $\rho_{\Dt, h}^n, q_{\Dt, h}^n \in U_h^k$, and the limits satisfy
\begin{subequations}
\label{eq:FDG:limit}
\begin{align}
\left(\frac{\rho_{\Dt, h}^{n+1}-\rho_{\Dt, h}^n}{\Dt}, \phi\right) &
-\sum_i \int_{I_i} q_{\Dt, h}^n \df_x\testR dx - \sum_i \widehat{(q_{\Dt, h}^n)}_{\iL} [\testR]_\iL=0, \quad \forall \phi\in U_h^k, \label{eq:FDG:limit:r}\\
(q_{\Dt, h}^{n+1}, \psi) &
=\langle v^2\rangle\left(\sum_i\int_{I_i} \rho_{\Dt, h}^{n+1} \df_x\testG dx + \sum_i \widehat{(\rho^{n+1}_{\Dt, h})}_\iL[\testG]_\iL    \right) ,\quad \forall \psi\in U_h^k,
 \label{eq:FDG:limit:g}
\end{align}
\end{subequations}
for $n\geq 0$, with the initial data given by \eqref{eq:init:limit}.
\end{thm}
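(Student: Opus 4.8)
The plan is to combine the \emph{a priori} energy dissipation of Theorem \ref{eq:thm1} with compactness of the finite dimensional space $U_h^k$, and then to pass to the limit $\eps\to0$ in the scheme \eqref{eq:FDG:1T} recursively in $n$. The first and central step is to upgrade the stability bound \eqref{eq:thm1:0}, which only controls $\eps^2|||g_{\eps,\Dt,h}^n|||^2$, into a bound on $|||g_{\eps,\Dt,h}^n|||$ that is \emph{uniform} in $\eps$. The strict time-step condition \eqref{eq:Dt0} is tailored precisely for this. Revisiting the proof of Theorem \ref{eq:thm1}, one checks that at $\Dt=\Dt_{stab,c_0}$ the quantity $\gamma$ in \eqref{eq:gamma} equals exactly $-c_0$ in both cases of \eqref{eq:Dt0} (and the $k=0$ value is no larger), so that $\Dt<\Dt_{stab,c_0}$ gives $\gamma\le-c_0<0$ for every $k\ge0$. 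Substituting this into \eqref{eq:thm1:5} yields the genuinely dissipative inequality
\[
\mE_{n+1}+2\Dt\,c_0\,|||g_{\eps,\Dt,h}^n|||^2\le\mE_n,\qquad \mE_n:=||\rho_{\eps,\Dt,h}^n||^2+\eps^2|||g_{\eps,\Dt,h}^{n-1}|||^2.
\]
Telescoping from $n=1$ and using Lemma \ref{lem:init}(ii) to bound $\mE_1$ uniformly in $\eps$, I obtain $2\Dt\,c_0\sum_{n\ge1}|||g_{\eps,\Dt,h}^n|||^2\le\sup_\eps\mE_1<\infty$. Since $\Dt$ and $h$ are fixed, this delivers $\sup_\eps|||g_{\eps,\Dt,h}^n|||<\infty$ and $\sup_\eps||\rho_{\eps,\Dt,h}^n||<\infty$ for every $n$; in particular $q_{\eps,\Dt,h}^n=\langle vg_{\eps,\Dt,h}^n\rangle$ satisfies $||q_{\eps,\Dt,h}^n||\le\langle v^2\rangle^{1/2}|||g_{\eps,\Dt,h}^n|||$, so it is bounded in $U_h^k$ as well.

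Next I extract limits and pass to the limit in the density update. Because $U_h^k$ is finite dimensional, from any sequence $\eps_m\to0$ I can, by a diagonal argument over $n$, pass to a subsequence (not relabelled) along which $\rho_{\eps_m,\Dt,h}^n\to\bar\rho^n$ and $q_{\eps_m,\Dt,h}^n\to\bar q^n$ for all $n$, the convergence being in any norm; weak sequential compactness of the Hilbert space $L^2(\Omega_v;d\mu)\otimes U_h^k$ additionally furnishes a weak limit of $g_{\eps_m,\Dt,h}^n$ whose $v$-moment is $\bar q^n$. Since $a_h(g_h^n,\phi)$ in \eqref{eq:FDG:1T:r} depends on $g_h^n$ only through $\langle vg_h^n\rangle=q_h^n$ and its traces, and strong convergence in $U_h^k$ carries point values (hence numerical fluxes), I may pass to the limit directly in \eqref{eq:FDG:1T:r} to obtain \eqref{eq:FDG:limit:r} for the pair $(\bar\rho,\bar q)$.

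To recover \eqref{eq:FDG:limit:g}, I multiply \eqref{eq:FDG:1T:g} by $\eps^2$, test with $\psi\in U_h^k$, and apply the moment $\langle v\,\cdot\,\rangle$ (using $\langle v\rangle=0$), which yields
\[
(q_h^{n+1},\psi)=\langle v^2\rangle\,d_h(\rho_h^{n+1},\psi)-\eps^2\Big(\tfrac{q_h^{n+1}-q_h^n}{\Dt},\psi\Big)-\eps\,\langle v\,b_{h,v}(g_h^n,\psi)\rangle.
\]
The last two terms vanish as $\eps\to0$: the bracketed difference quotient is bounded because $q_h^{n},q_h^{n+1}$ are, and $\langle v\,b_{h,v}(g_h^n,\psi)\rangle$ is bounded by $C(k)h^{-1}|||g_h^n|||\,||\psi||$ through the inverse inequalities \eqref{eq:inv}, using the uniform bound on $|||g_h^n|||$ from the first step. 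Passing to the limit gives $(\bar q^{n+1},\psi)=\langle v^2\rangle d_h(\bar\rho^{n+1},\psi)$, i.e.\ \eqref{eq:FDG:limit:g}.

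Finally, Lemma \ref{lem:init}(i) identifies the initial limits as $\bar\rho^0=\rho^0_{\Dt,h}$ and $\bar q^0=q^0_{\Dt,h}$ in \eqref{eq:init:limit}. Since the limiting recursion \eqref{eq:FDG:limit} determines $(\rho^n_{\Dt,h},q^n_{\Dt,h})$ uniquely from this data (solve \eqref{eq:FDG:limit:r} for $\rho^{n+1}$, then \eqref{eq:FDG:limit:g} for $q^{n+1}$), every subsequential limit coincides with this unique solution and is therefore independent of the subsequence; hence the whole family converges and \eqref{eq:aa:5} holds. The main obstacle is the very first step: the raw stability estimate only gives $O(\eps^{-1})$ control of $|||g_h^n|||$, which is useless for killing the $\eps\,b_{h,v}(g_h^n,\psi)$ term in the limit. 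Everything hinges on converting the strict CFL \eqref{eq:Dt0} into the strict negativity $\gamma\le-c_0$ and summing the resulting dissipation to bound $\sum_n|||g_h^n|||^2$ uniformly in $\eps$; once this is in hand, the remaining passages to the limit are routine thanks to the finite dimensionality of $U_h^k$.
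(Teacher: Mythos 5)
Your proposal is correct and takes essentially the same route as the paper: convert the strict CFL \eqref{eq:Dt0} into $\gamma\le -c_0$ in \eqref{eq:gamma}, obtain the dissipative inequality $\mE_{n+1}+2\Dt c_0|||g^n_{\eps,\Dt,h}|||^2\le \mE_n$ and hence uniform-in-$\eps$ bounds on $||\rho^n_{\eps,\Dt,h}||$ and $|||g^n_{\eps,\Dt,h}|||$ (seeded by Lemma \ref{lem:init}), then use finite-dimensionality of $U_h^k$ together with weak sequential compactness in $L^2(\Omega_v)$ to extract subsequential limits, pass to the limit in the scheme, and invoke uniqueness of the limiting recursion \eqref{eq:FDG:limit} to upgrade to convergence of the whole family. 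Your telescoped dissipation sum and the explicit $\eps^2$-weighted $v$-moment argument for \eqref{eq:FDG:limit:g} only spell out what the paper compresses (its per-step bound $|||g^n|||^2\le \mE_1/(2\Dt c_0)$ from \eqref{eq:aa:2}, and its remark that the limits ``straightforwardly'' satisfy the limiting scheme), so the differences are presentational rather than substantive.
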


The limiting scheme \eqref{eq:FDG:limit}, though being implicitly defined, is intrinsically explicit if one first solves $\rho_{\Dt, h}^{n+1}$ then $g_{\Dt, h}^{n+1}$ in actual implementation.  Note that this limiting scheme with any fixed $k$ is a consistent scheme for the limiting heat equation $\partial_t \rho=\partial_x(\langle v^2\rangle \partial_x\rho)$ in its first order form
\begin{equation}
\partial_t \rho+\partial_x q=0,\quad q=-\langle v^2\rangle \partial_x \rho.
\end{equation}
In fact, the spatial discretization in the limiting scheme is exactly the local DG spatial discretization for the heat equation studied in \cite{cockburn1998local}.

\begin{proof}
We start with revisiting $\gamma$ in \eqref{eq:thm1:5} and \eqref{eq:gamma} from the stability analysis. By requiring $\gamma \leq - c_0$, we obtain
the condition \eqref{eq:Dt0} on the time step $\Dt$. It is easy to see $\Dt_{stab,c_0} < \Dt_{stab}$ (In fact, $\lim_{c_0\rightarrow 0}\Dt_{stab,c_0}=\Dt_{stab}$.) The remaining of the proof consists of two steps.

 \bigskip
{\sf Step 1:}
In this step, we want to show
\beq
\label{eq:aa:0}
\sup_\eps ||\rho_{\eps, \Dt, h}^n||<\infty,\qquad \sup_\eps |||g_{\eps, \Dt, h}^n|||<\infty, \quad \forall n\geq 1,
\eeq
when $\Dt$ and $h$ satisfy \eqref{eq:Dt0}. With such mesh parameters,  equation \eqref{eq:thm1:5} turns to
\beq
\label{eq:aa:1}
\frac{1}{2\Dt}\left((||\rho_{\eps, \Dt, h}^{n+1}||^2+\eps^2|||g_{\eps, \Dt, h}^n|||^2)-(||\rho_{\eps, \Dt, h}^{n}||^2+\eps^2|||g_{\eps, \Dt, h}^{n-1}|||^2)\right)\leq -c_0 |||g_{\eps, \Dt, h}^n|||^2,
\eeq
that is
\beq
\label{eq:aa:2}
||\rho_{\eps, \Dt, h}^{n+1}||^2+(2\Dt c_0+\eps^2)|||g_{\eps, \Dt, h}^n|||^2\leq ||\rho_{\eps, \Dt, h}^{n}||^2+\eps^2|||g_{\eps, \Dt, h}^{n-1}|||^2.
\eeq
On the other hand, $\Dt_{stab,c_0} < \Dt_{stab}$, and this implies  the stability estimate \eqref{eq:thm1:0}.
Combining \eqref{eq:aa:2}, \eqref{eq:thm1:0}, the boundedness of $\sup_\eps||\rho_{\eps, \Dt, h}^{1}||, \sup_\eps|||g_{\eps, \Dt, h}^{0}|||$ in Lemma \ref{lem:init}, we will obtain \eqref{eq:aa:0}.

 \bigskip
{\sf Step 2:} Now we would like to establish the asymptotic behavior in \eqref{eq:aa:5}, as well as the fact that the limits $\rho_{\Dt, h}^n$ and $q_{\Dt, h}^n$ satisfy \eqref{eq:FDG:limit} with the initial data \eqref{eq:init:limit}.

First of all, it is easy to see that to obtain \eqref{eq:aa:5}, it is equivalent to show
\beq
\label{eq:aa:6}
 \lim_{m\rightarrow \infty}\rho_{\eps_m, \Dt, h}^n  =\rho_{\Dt, h}^n, \quad q_{\eps_m, \Dt, h}^n\rightharpoonup q^n_{\Dt, h}\;\textrm{in}\; L^2(\Omega_x)\;\;\textrm{as} \;\; m\rightarrow \infty,\;\;\forall n\geq 1
\eeq
where $\{\eps_m\}_{m=1}^{\infty}$ is any sequence such that $\lim_{m\rightarrow \infty} \eps_m=0$. Given that $U_h^k$ is finite dimensional, the boundedness of $\sup_m ||\rho_{\eps_m, \Dt, h}^n||$ from \eqref{eq:aa:0} implies that there is a subsequence $\{\rho_{\eps_{m_r}, \Dt, h}^n\}_{r=1}^{\infty}$ converging in $U_h^k$ under {\em any} norm as $r\rightarrow \infty$. Let's denote the limit as $\rho_{\Dt, h}^n\in U_h^k$.

Now we turn to $\{q_{\eps_m, \Dt, h}^n\}_{m=1}^\infty$. For the simplicity of notations, this sequence will be denoted as $\{q^n_{\eps_m}\}_{m=1}^\infty=\{\langle v g^n_{\eps_m}\rangle\}_{m=1}^\infty$ in the present paragraph. For each function $g_{\eps_m}$, it can be written as $g^n_{\eps_m}(x,v)=\sum_{j=1}^{N_k}\alpha_{\eps_m}^{(j)}(v)\Psi_j(x)$.
In addition, we have
$||| g^n_{\eps_m}|||=\left(\sum_{j=1}^{N_k} ||\alpha_{\eps_m}^{(j)}||^2_{L^2(\Omega_v)}\right)^{1/2}$. This, in addition to the boundedness of
$\sup_m |||g_{\eps_m}^n|||$ in \eqref{eq:aa:0}, indicates that $\sup_m ||\alpha_{\eps_m}^{(j)}||^2_{L^2(\Omega_v)}$, therefore $\sup_r ||\alpha_{\eps_{m_r}}^{(j)}||^2_{L^2(\Omega_v)}$  is bounded for any $j=1,\cdots, N_k$. As a Hilbert space, $L^2(\Omega_v)$ is weakly sequentially compact, that is, $\{\alpha_{\eps_{m_r}}^{(j)}\}_{r=1}^\infty$ has a subsequence which is weakly convergent in $L^2(\Omega_v)$. Without loss of generality, this subsequence is still denoted as $\{\alpha_{\eps_{m_r}}^{(j)}\}_{r=1}^\infty$, and the weak limit when $r\rightarrow \infty$ is denoted as $\alpha_0^{(j)}\in L^2(\Omega_v)$, $\forall j$. We now define $g_{\Dt, h}^n(x,v)=\sum_{j=1}^{N_k}\alpha_0^{(j)}(v)\Psi_j(x)$, and $q_{\Dt, h}^n= \langle v g_{\Dt, h}^n\rangle= \sum_{j=1}^{N_k}\langle v \alpha_0^{(j)}\rangle\Psi_j(x)$. For any $\psi\in U_h^k$,
\begin{equation*}
\lim_{r\rightarrow \infty}(q_{\eps_{m_r}, \Dt, h}^n, \psi)=\sum_{j=1}^{N_k} \left(\lim_{r\rightarrow\infty}\langle v \alpha_{\eps_{m_r}}^{(j)}\rangle \right)(\Psi_j(x), \psi)=\sum_{j=1}^{N_k} \left(\langle v \alpha_0^{(j)}\rangle \right)(\Psi_j(x), \psi)=(q_{\Dt, h}^n, \psi).
\end{equation*}

Up to now, we have shown that \eqref{eq:aa:5} holds for a subsequence of $\{\rho_{\eps_{m_r}, \Dt, h}^n\}_{r=1}^\infty$ and $\{q_{\eps_{m_r}, \Dt, h}^n\}_{r=1}^\infty$ as $r\rightarrow \infty$. Moreover, it is straightforward to see that the limits $\rho_{\Dt, h}^n$ and $g_{\Dt, h}^n$, $n=1, 2, \cdots$, satisfy \eqref{eq:FDG:limit} with the initial data \eqref{eq:init:limit}. On the other hand, given the initial data \eqref{eq:init:limit}, the solution to \eqref{eq:init:limit} at $t^{n+1}$, $n\geq 0$, is uniquely determined by first solving \eqref{eq:FDG:limit:r} for $\rho^{n+1}_{\Dt, h}$ and then solving  \eqref{eq:FDG:limit:g} for $q^{n+1}_{\Dt, h}$.  Finally, we can follow a standard contradiction argument and the uniqueness of the solution to \eqref{eq:FDG:limit} to conclude that \eqref{eq:aa:5} holds for the entire sequence.
\end{proof}

\begin{rem}
The rigorous asymptotic analysis is established for the methods with the first order accuracy in time  \eqref{eq:FDG:1T}.
 When higher order temporal discretizations are used as in \cite{JLQX_numerical}, one can follow the steps in this subsection to obtain a rigorous asymptotic analysis, as long as a stability estimate similar to \eqref{eq:thm1:5} is available.
\end{rem}

\bigskip
\noindent
{\bf Acknowledgement.}
This project was initiated during the authors' participation at the ICERM Semester Program on ``Kinetic Theory and Computation" in the fall of 2011. The authors want to thank for the generous support from the Institute. Part of the work was done at MFO in Oberwolfach during a Research in Pairs program. The first three authors appreciate the support and hospitality of the Institute.


\bibliographystyle{siam}
\bibliography{refer}

\end{document}